\lstdefinestyle{mystyle}{
    commentstyle=\color{codegreen},
    keywordstyle=\color{magenta},
    basicstyle=\ttfamily\footnotesize,
    breakatwhitespace=false,         
    breaklines=true,                 
    captionpos=b,                    
    keepspaces=true,                 
    numbers=left,                    
    numbersep=5pt,                  
    showspaces=false,                
    showstringspaces=false,
    showtabs=false,                  
    tabsize=5
}
\theoremstyle{definition}
\newtheorem{theorem}{Theorem}[section]
\newtheorem{proposition}[theorem]{Proposition}
\newtheorem{lemma}[theorem]{Lemma}
\newtheorem{conjecture}[theorem]{Conjecture}
\newtheorem{definition}[theorem]{Definition}
\newtheorem{example}[theorem]{Example}
\newtheorem{method}[theorem]{Method}
\newtheorem{case}{Case}
\theoremstyle{remark}
\newtheorem{remark}[theorem]{Remark}
\newtheorem{notation}[theorem]{Notation}
\numberwithin{equation}{section}
\begin{document}

\title{Graphs missing a connected partition}

\author{Foster Tom}
\thanks{Department of Mathematics, MIT}
\subjclass[2020]{Primary 05C70; Secondary 05E05}
\keywords{chromatic symmetric function, connected partition, cut vertex, e-positive, graph partition, Stanley--Stembridge conjecture, tree isomorphism conjecture}
%05: Graph theory
%05C05:Trees
%05C15:Coloring of graphs and hypergraphs
%05C70:Factorization, matching, partitioning, covering and packing 
%05E05:Symmetric functions and generalizations

\begin{abstract}
We prove that a graph with a cut vertex whose deletion produces at least five connected components must be missing a connected partition of some type. We prove that this also holds if there are four connected components that each have at least two vertices. In particular, the chromatic symmetric function of such a graph cannot be $e$-positive. This brings us very close to the conjecture by Dahlberg, She, and van Willigenburg of non-$e$-positivity for all trees with a vertex of degree at least four. We also prove that spiders with four legs cannot have an $e$-positive chromatic symmetric function.
\end{abstract}

%Forest triples + Trees talk abstract
%Title: On $e$-positivity of the chromatic symmetric function

%We prove a new signed elementary symmetric function expansion of the chromatic symmetric function. We then use sign-reversing involutions to prove $e$-positivity for graphs formed by joining cycles or cliques at single vertices. By considering connected partitions, we prove non-$e$-positivity for all trees that have either a vertex of degree at least five, or a vertex of degree four that is not adjacent to a leaf. We also prove that spiders with four legs are not $e$-positive.

\maketitle
%\tableofcontents
\vspace{30pt}
\section{Introduction}\label{section:introduction}

Graph partitioning is a fundamentally important problem in computer science. For example, the max-flow min-cut theorem \cite{maxflowmincut} is a cornerstone result in optimization theory. There are numerous practical applications for finding communities in large networks and graph partitions can allow for parallelization of algorithms to reduce computational complexity. Surveys of graph partitioning problems and algorithms can be found in \cite{graphparalg, graphparadv, graphparmeth, graphclustering}.

Let $G=(V,E)$ be an $n$-vertex graph and let $\lambda=(\lambda_1,\ldots,\lambda_\ell)$ be an integer partition of $n$. A \emph{connected partition of $G$ of type $\lambda$} is a partition of the vertices $V$ into connected subsets of sizes $\lambda_1,\ldots,\lambda_\ell$. A more precise definition and examples will be given in Section \ref{section:background}.

Many authors studied problems of finding connected partitions that satisfy certain additional constraints or optimizations \cite{fairdiv, balconpar, balgraphpar, parcon, highlyconpar, doublypar}. In this paper, we consider the problem of finding an integer partition $\lambda$ for which $G$ is \emph{missing} a connected partition of type $\lambda$. One application of this is to non-$e$-positivity of chromatic symmetric functions. Stanley \cite{chromsym} defined the \emph{chromatic symmetric function} of a graph $G=(V,E)$ to be
\begin{equation*}
X_G=\sum_{\substack{\kappa:V\to\{1,2,3,\ldots\}\\\text{ if }ij\in E,\text{ then }\kappa(i)\neq\kappa(j)}}\prod_{v\in V}x_{\kappa(v)}
\end{equation*} 
and we say that $X_G$ is \emph{$e$-positive} if it can be written as a nonnegative linear combination in the basis of \emph{elementary symmetric functions}, defined by
\begin{equation*}
e_\lambda=e_{\lambda_1}\cdots e_{\lambda_\ell},\text{ where }e_k=\sum_{i_1<\cdots<i_k}x_{i_1}\cdots x_{i_k}.
\end{equation*}
It is also common to say that $G$ itself is \emph{$e$-positive}. Wolfgang found a powerful necessary condition for $e$-positivity.

\begin{lemma} \cite[Proposition 1.3.3]{conparts} If a connected graph $G$ is missing a connected partition of some type $\lambda\vdash n$, then it is not $e$-positive.
\end{lemma}

Dahlberg, She, and van Willigenburg \cite{chromspostrees} were interested in determining which trees are $e$-positive. Many authors \cite{eposclawcon, chromspostrees, spider4m22m1, spiderbroom, spiderepos} proved $e$-positivity or non-$e$-positivity for particular subclasses of trees. Dahlberg, She, and van Willigenburg checked the following conjecture for all trees with at most $12$ vertices.

\begin{conjecture}\label{conj:four}
\cite[Conjecture 6.1]{chromspostrees} If $T$ is a tree with a vertex of degree at least $4$, then $T$ is not $e$-positive.
\end{conjecture}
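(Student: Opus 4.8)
The plan is to reduce Conjecture~\ref{conj:four} to a packing statement about subtrees and then attack that by a pigeonhole argument anchored at a vertex of maximum degree. The engine is the \emph{connected-partition necessary condition}: if $G$ is a connected graph on $n$ vertices and $X_G$ is $e$-positive, then $G$ admits a connected partition of type $\lambda$ for every partition $\lambda$ of $n$, where a connected partition of type $\lambda$ is a set partition of $V(G)$ into blocks of sizes $\lambda_1,\dots,\lambda_\ell$ each inducing a connected subgraph. I would record this as a preliminary lemma (for the families of $\lambda$ the rest of the argument needs --- the ``double-hook'' types $(a,b,1^{n-a-b})$ and ``triple-hook'' types $(a,b,c,1^{n-a-b-c})$ --- it is most transparent), proving it by analysing how the coefficients of $X_G$ in the elementary basis are forced by the counts of connected partitions. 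Granting it, the task becomes: every tree $T$ with a vertex of degree $\geq 4$ is missing a connected partition of some type. Since a connected induced subgraph of a tree is again a subtree, and any subtree grows to any prescribed larger size inside a containing subtree, this is equivalent to exhibiting sizes that cannot be realized by pairwise vertex-disjoint subtrees of $T$.

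For the core pigeonhole, fix $v$ with $\deg v = d \geq 4$, let $T_1,\dots,T_d$ be the branches at $v$ with $|T_i| = t_i$ and $t_1 \geq \cdots \geq t_d \geq 1$, so $n = 1 + \sum_i t_i$. A subtree of $T$ of size $s$ either lies inside a single branch, forcing $s \leq t_1$, or contains $v$; hence at most one block of any connected partition exceeds size $t_1$, so any type with two parts larger than $t_1$ is unrealizable. Concretely the double-hook $(t_1+1,\,t_1+1,\,1^{\,n-2t_1-2})$ is missing whenever $2(t_1+1)\leq n$, that is $t_1 + 1 \leq t_2 + \cdots + t_d$. This already handles $K_{1,d}$ for $d \geq 4$, every $d \geq 5$ in which the largest branch is not dominant, and every degree-$4$ vertex with no leaf neighbour (so all $t_i\geq 2$) as long as $t_1$ is not too large; for spiders with short legs and small brooms such as $S(2,2,2,2)$ or $S(3,1,1,1)$ I would upgrade the double-hook to a triple-hook like $(3,3,3)$ or $(3,2,2)$, using that the pendant leaves at $v$ cannot be grouped into one connected block among themselves.

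The substantive case is when $t_1$ dominates, where the two-large-parts trick fails and one must work down the long direction. Here I would take one long part $L$ that contains $v$ and reaches deep into $T_1$, chosen so that every connected piece of $T_1\setminus L$ is small, together with two or more moderate parts of size exceeding $\max(t_2,\dots,t_d)$: because $L$ is the unique block through $v$, the moderate parts would have to be packed into the remaining branches or the small leftover pieces of $T_1$, which is impossible on size grounds. The delicate point is choosing $L$ so that its complement decomposes exactly as required while the part sizes total $n$; carrying this through --- recursing on a vertex of degree $\geq 4$ inside $T_1$ when one exists, and otherwise using that in the absence of further high-degree vertices $T_1$ can be swallowed in a controlled way --- should settle every tree with a vertex of degree $\geq 5$ and every tree with a degree-$4$ vertex that has no leaf neighbour.

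The main obstacle I anticipate is the residual case: a tree of maximum degree exactly $4$ in which every degree-$4$ vertex is adjacent to a leaf. Now the local picture at $v$ is a claw with pendant leaves, and $v$'s unique block can absorb those leaves one at a time, which badly weakens the pigeonhole --- as the non-trivial branches grow, essentially all double- and triple-hook types become realizable, and whether the remaining types are realizable turns on a subtle interplay between the number of pendant leaves ($1 \leq p \leq 3$) and the detailed shapes of the branches. Pushing the method through this case appears to need either a finer necessary condition for $e$-positivity than the connected-partition criterion, or a heavy multi-parameter case analysis; it is precisely here that Conjecture~\ref{conj:four} stays open, so the degree-$\geq 5$ statement and the leaf-free degree-$4$ statement are, realistically, the strongest conclusions this route currently yields.
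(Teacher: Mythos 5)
The statement you were asked about is a \emph{conjecture}: the paper does not prove it, and neither do you. Your proposal is honest about this --- you concede the residual case of a degree-$4$ vertex adjacent to a leaf --- and your diagnosis of where the method dies is exactly right: the paper's Proposition \ref{prop:6mhas} shows that $S(6m,6m-2,1,1)$ admits a connected partition of \emph{every} type, so Wolfgang's necessary condition provably cannot close that case. What the paper actually establishes is Theorem \ref{thm:main} (degree $\geq 5$, or degree $4$ with no leaf neighbour) plus Theorem \ref{thm:spider4} (all four-legged spiders, the leaf-legged ones being handled not by connected partitions but by citing Zheng's coefficient computation for spiders with $n\geq c^2+c+1$); your route has no counterpart for the latter.

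Even restricted to the partial results you do claim, there is a substantive gap in the case you call ``when $t_1$ dominates.'' Your pigeonhole --- two parts of size $t_1+1$ cannot both avoid $v$ --- requires $2(t_1+1)\leq n$, i.e.\ $t_1+1\leq t_2+\cdots+t_d$, and fails already for, say, $S(100,2,2,2,2)$. Your fallback (a long block $L$ through $v$ reaching into $T_1$, plus ``moderate'' parts, with recursion into $T_1$) is not an argument: you never specify the partition type, and nothing forces the moderate parts to avoid $T_1\setminus L$, since $T_1\setminus L$ can be a single large subtree. The paper's mechanism is genuinely different and makes the largest branch irrelevant: writing $b$ for the second-largest branch and $c$ for the total size of the remaining $d-2$ branches, Lemma \ref{lem:partsums} shows it suffices to build $\lambda\vdash n$, with all parts exceeding the largest small branch, such that \emph{every} rearrangement has a partial sum in $\{b+1,\ldots,b+c\}$. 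This is achieved by taking all parts in an interval $\bigl\{\lceil(b+1)/q\rceil,\ldots,\lfloor(b+c)/q\rfloor\bigr\}$ for a cleverly chosen $q$, using an interval version of the Frobenius coin problem to show such $\lambda$ exists, and splitting into four regimes according to $b$ versus $c$ (with computer verification for $c\leq 500$). None of this delicate quantitative work is present in, or replaceable by, your sketch, so as written your proposal establishes only the non-dominant-branch subcase of the paper's partial results.
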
 

They proved the following result. 

\begin{theorem}\cite[Theorem 4.1]{chromspostrees} If $G$ is an $n$-vertex graph with a cut vertex whose deletion produces $d\geq\log_2 n+1$ connected components, then $G$ is missing a connected partition of some type $\lambda\vdash n$.
\end{theorem}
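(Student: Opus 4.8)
The plan is to describe connected partitions in terms of the cut vertex $v$, translate realizability of a partition type into a packing condition on the component sizes, and then spend the hypothesis $d\geq\log_2 n+1$ — equivalently $2^{d-1}\geq n$ — to produce a type that cannot be packed. Write $C_1,\dots,C_d$ for the components of $G-v$, set $a_i=|C_i|$, order so that $a_1\geq a_2\geq\cdots\geq a_d\geq 1$, and note $a_1+\cdots+a_d=n-1$.

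First I would record the structural facts. In a connected partition there is exactly one block $V_1$ containing $v$; every other block, not containing $v$, lies inside a single component; and $V_1$ meets at most $|V_1|-1$ of the components, since each component it meets contributes at least one vertex to $V_1\setminus\{v\}$. Hence if a type $\lambda\vdash n$ is realizable then some part $p$ of $\lambda$ equals $|V_1|$ and, grouping the remaining blocks by component, $\lambda\setminus\{p\}$ splits into a partition of $C_i\setminus V_1$ for each $i$; in particular the multiset $\lambda\setminus\{p\}$ can be distributed among bins of capacities $a_1,\dots,a_d$, the material assigned to $C_i$ summing to $a_i-|V_1\cap C_i|\le a_i$. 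To prove the claim it suffices to exhibit a $\lambda$ for which no choice of part $p$ makes this possible; note this only uses the forward implication, so there is no need to restrict attention to any special graph.

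I would then split on the size of the largest component. If $a_1\leq\lfloor n/2\rfloor-1$, a two-part type already works: pick $\lambda_1$ with $\lceil n/2\rceil\leq\lambda_1\leq n-1-a_1$ (possible by the bound on $a_1$) and take $\lambda=(\lambda_1,\,n-\lambda_1)$; both parts then have size at least $a_1+1$, so whichever one is $|V_1|$ the other is a block avoiding $v$ of size exceeding every $a_i$, which is impossible. (This case does not use the bound on $d$.) If instead $a_1\geq\lfloor n/2\rfloor$, then $C_1$ is dominant and $T:=a_2+\cdots+a_d=n-1-a_1$ satisfies $T\leq\lceil n/2\rceil-1$; here I would use a type all of whose parts lie in the interval $(a_2,\,T]$. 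Such a $\lambda$ is unrealizable: the part playing the role of $|V_1|$ is at most $T$, so $V_1$ cannot contain all of $C_2\cup\cdots\cup C_d$, and thus some $C_j$ with $j\geq 2$ leaves a nonempty remainder, forcing a block of size at most $a_j\leq a_2$ — too small to be a part of $\lambda$. A partition of $n$ into parts in $(a_2,T]$ exists exactly when $[\,n/T,\ n/(a_2+1)\,]$ contains an integer, and since $n/T>1$ this holds whenever $T\geq 2(a_2+1)$, because then $n/(a_2+1)\geq 2n/T$ and the interval contains $[\,n/T,\,2n/T\,]$.

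The remaining, and decisive, case is $a_1\geq\lfloor n/2\rfloor$ together with $a_3+\cdots+a_d\leq a_2+1$, i.e.\ when $C_2$ is itself dominant among $C_2,\dots,C_d$. Here I would iterate the previous step, now treating $C_1,C_2$ as large and looking for parts in $(a_3,\,a_3+\cdots+a_d]$, and so on. Whenever this ``obstinate'' situation recurs, the total $T^{(m)}$ of the current family of small components satisfies $T^{(m+1)}\leq\tfrac12(T^{(m)}+1)$, so by induction $T^{(m)}<2^{d-1-m}+1$ (using $T^{(0)}=T<2^{d-1}$), while the small family loses one component at each step; since it starts with $d-1\geq\log_2 n$ components, either some step is non-obstinate — where the construction above applies verbatim — or after at most $d-3$ steps the small family has only two components of total size less than $5$. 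The main difficulty I anticipate is exactly this bookkeeping: controlling the terminal configurations of the recursion and, in particular, dealing with the residue conditions ($n$ even, $3\mid n$, and so on) that arise for the tiniest surviving families, where one must fine-tune the type (for instance by replacing some parts or inserting one extra part of a carefully chosen size). This is precisely where the exact bound $2^{d-1}\geq n$, rather than anything weaker, is consumed — it is what keeps the terminal totals small enough to settle by hand. Once all cases are assembled, $G$ is missing a connected partition of some type, and the cited Proposition of Wolfgang then yields that $X_G$ is not $e$-positive.
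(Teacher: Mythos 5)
This statement is cited from Dahlberg, She, and van Willigenburg and is not reproved in the present paper, so there is no in-paper proof to compare against; however, the paper's own machinery (Lemma~\ref{lem:partsums}, Lemma~\ref{lem:q}, Lemma~\ref{lem:frobint}, and the proof of Theorem~\ref{thm:main2}) proves a strictly stronger result. Your reduction of realizability to a packing/partial-sum condition and your use of ``interval types'' is the right framework and matches the spirit of Lemma~\ref{lem:partsums} and Lemma~\ref{lem:frobint}; Case~1 ($a_1\le\lfloor n/2\rfloor-1$) and the non-obstinate step-$0$ case are correct as written. But where the paper attacks the remaining regime by rescaling the single interval $\{b+1,\dots,b+c\}$ by a cleverly chosen integer $q$ (Lemma~\ref{lem:q}), you instead propose iterating the decomposition (shrinking the ``small'' family); this is a genuinely different, and as written incomplete, route.

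The gap is in the obstinate chain and its terminal case, and it is not mere bookkeeping. Your criterion ``$T^{(m)}\ge 2(a_{m+2}+1)$'' is sufficient for the interval-Frobenius existence argument but far from necessary, so the iteration can march past a step that already works. Concretely, take $G=S(3,1,1,1)$, $n=7$, $d=4$: then $a_1=3\ge\lfloor n/2\rfloor$, $T^{(0)}=3<2(a_2+1)=4$, so step $0$ is declared obstinate and you move to step $1=d-3$, where the interval is $(1,2]=\{2\}$ and no partition of the odd number $7$ exists. Your proposed fine-tuning also fails here: the natural modification $\lambda=(3,2,\dots,2)$ does not preserve the step-$1$ unrealizability argument, since a block of size $3$ containing $v$ can absorb both singleton components of the small family (and indeed such a $\lambda$ can be realizable on graphs with all large components of even size). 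In this example the correct move is to return to step $0$, where $(a_2,T^{(0)}]=\{2,3\}$ does admit the partition $(3,2,2)$, which is unrealizable in $S(3,1,1,1)$. So a correct version of your plan must search \emph{all} steps $m$ for an interval type rather than following the greedy obstinacy rule, and you would then need to prove that some $m$ always succeeds; as written, the terminal-case ``residue'' handling is not a sound repair.
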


In particular, if $T$ is an $n$-vertex tree with a vertex of degree $d\geq\log_2n+1$, then $T$ is not $e$-positive. Zheng strengthened this result considerably.

\begin{theorem}\cite[Theorem 3.13]{spiderepos} If $G$ is a graph with a cut vertex whose deletion produces at least $6$ connected components, then $G$ is missing a connected partition of some type $\lambda\vdash n$.
\end{theorem}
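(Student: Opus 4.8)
The plan is to reduce the statement to a question about packing an integer partition into bins of prescribed capacities, and then to settle that question by exhibiting, for each configuration of component sizes, an explicit type that cannot be packed. For the reduction, let $v$ be the cut vertex, let $C_1,\dots,C_d$ be the connected components of $G-v$ with sizes $n_1\le\cdots\le n_d$, and put $n=|V(G)|=1+\sum_i n_i$. In any connected partition of $G$, the block $B_0$ containing $v$ is the only block that can meet more than one $C_i$: any other block is a connected vertex set avoiding $v$, and since every edge of $G$ not incident to $v$ lies inside a single $C_i$, such a block is contained in one $C_i$. Hence, if $G$ has a connected partition of type $\lambda$, then one part $\mu_0:=|B_0|$ of $\lambda$ is distinguished and the remaining parts of $\lambda$ are distributed among $d$ bins, the $i$-th bin receiving parts of total size at most $n_i$. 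So it suffices to produce a partition $\lambda\vdash n$ such that for \emph{every} part $\mu_0$ of $\lambda$, the multiset $\lambda\setminus\{\mu_0\}$ cannot be packed into bins of capacities $n_1,\dots,n_d$; any such $\lambda$ is then a missing type.

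The only packing obstruction I will need is the following: if for some threshold $t$ the parts of a multiset $S$ exceeding $t$ have total size larger than $\sum_{i\,:\,n_i>t}n_i$, then $S$ cannot be packed, because those large parts can only occupy bins of capacity larger than $t$. I will mostly use $t=n_{d-1}$, so that the unique admissible bin is the largest, and in the hardest regime $t=n_{d-2}$ or lower.

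I would then split into cases according to how the largest component compares to the rest, using $d\ge 6$ to guarantee there are always enough components for the chosen type to fit into $n$ vertices and to keep the relevant size windows wide. (1) If $n_d\le\frac{n-2}{2}$, take $\lambda=(\lceil n/2\rceil,\lfloor n/2\rfloor)$: both parts exceed $n_d$, hence fit in no bin, so whichever is chosen as $\mu_0$ the other cannot be placed. (2) If $n-n_d\ge 2n_{d-1}+2$, set $s=n_{d-1}+1$ and $\lambda=\big(s^{\lfloor n/s\rfloor},\,1^{\,n-s\lfloor n/s\rfloor}\big)$: each part of size $s$ exceeds every $n_i$ with $i<d$, so it must go into the bin of capacity $n_d$, which holds at most $\lfloor n_d/s\rfloor$ of them, and since $\lfloor n/s\rfloor\ge\lfloor n_d/s\rfloor+\lfloor(n-n_d)/s\rfloor\ge\lfloor n_d/s\rfloor+2$, deleting any single part still leaves at least $\lfloor n_d/s\rfloor+1$ parts of size $s$, which overflow (the deleted $1$'s being irrelevant to this overflow). (3) In the remaining range $n_d\ge\frac{n-1}{2}$ and $n-n_d\le 2n_{d-1}+1$, the components $C_1,\dots,C_{d-2}$ are collectively small: their total $T$ satisfies $T\le n_{d-1}$. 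Here I use $t=n_{d-2}$ and try to take $\lambda$ with all parts in the interval $[\,n_{d-2}+1,\;T\,]$ (nonempty, of width $n_1+\cdots+n_{d-3}\ge 3$); then every part exceeds the third-largest capacity, so $\lambda\setminus\{\mu_0\}$ must fit, by volume alone, into the two largest bins of total capacity $n-1-T$, which fails for \emph{every} $\mu_0$ since $n-\mu_0\ge n-T>n-1-T$. Such a $\lambda$ exists as soon as $n$ lies in one of the intervals $[\,j(n_{d-2}+1),\,jT\,]$, and the width bound, together with $n$ being large compared with $T$, places it there unless $n_{d-2}$ is itself moderately large.

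The delicate part — and the main obstacle — is the tail of case (3): when the two largest components are both large relative to the others and $n$ happens to fall in a gap between consecutive windows $[\,j(n_{d-2}+1),\,jT\,]$, the ``all parts in one window'' construction breaks down. There one must pass to the next threshold $n_{d-3}$ (then $n_{d-4}$, and so on), exploiting the $\ge 6$ components to ensure that after finitely many steps the window becomes wide enough and is positioned to contain $n$, or instead pad a block of equal parts with one tailored remainder part and verify the overflow directly. Carrying out this descent cleanly, and pinning down precisely which configurations survive to the last step — which is presumably where the constant $6$ rather than $5$ is forced — is the real content of the argument.
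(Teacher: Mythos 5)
Your reduction to bin packing is sound (the block containing $v$ is the only one that can span components, so a type is missing as soon as $\lambda\setminus\{\mu_0\}$ cannot be distributed for any choice of $\mu_0$), and Cases~(1) and~(2) are correct as written. But Case~(3) is not closed, and you rightly flag it: the argument only succeeds when $n$ lies in one of the windows $[j(n_{d-2}+1),\,jT]$, and the proposed ``descent'' to thresholds $n_{d-3},n_{d-4}$ is a hope, not a proof. As you drop a level, the window's width $\sum_{i=1}^{d-j-1}n_i-1$ shrinks, so there is no a priori reason one of the finitely many levels (only $j=2,3,4$ when $d=6$) catches $n$ before the window degenerates to a width-zero or otherwise useless interval. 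Since this regime is precisely where the content of the theorem lives, the proposal as it stands is incomplete.

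It is worth seeing how this paper deals with the same obstacle (it does not reprove Zheng's result but cites it and establishes the stronger Theorem~\ref{thm:main2}, which implies it since $d\ge 6$ gives $c\ge c_1+3$). Rather than lowering the threshold while keeping a one-part-per-slot obstruction, Lemma~\ref{lem:q} introduces a parameter $q$: one seeks a partition with all parts in $J=\{\lceil(b+1)/q\rceil,\dots,\lfloor(b+c)/q\rfloor\}$, so that for \emph{any} rearrangement $\alpha$ the consecutive sum $\alpha_1+\cdots+\alpha_q$ lands in $\{b+1,\dots,b+c\}$, after which Lemma~\ref{lem:partsums} rules out a connected partition. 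Varying $q$ produces a family of windows at different scales and positions, a far richer supply than your fixed windows at different thresholds. Even with this extra lever, closing the argument requires a delicate regime-by-regime choice of $q$ (Lemma~\ref{lem:analysis}) together with outright computer verification over the finite range $c\le 500$ (Propositions~\ref{prop:c40} and~\ref{prop:c500}). The gap you left open is exactly the portion of the proof that demands all this machinery, so I would not expect your descent to go through without comparably serious work.
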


Our main result lowers this bound further.

\begin{theorem}\label{thm:main}
Let $G$ be a graph with a cut vertex whose deletion produces either at least $5$ connected components, or $4$ connected components that each have at least two vertices. Then $G$ is missing a connected partition of some type $\lambda\vdash n$.
\end{theorem}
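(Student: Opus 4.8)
The plan is to fix a cut vertex $v$, write $V_1,\dots,V_d$ for the vertex sets of the components of $G-v$, put $n_i=|V_i|$ with $n_1\le\cdots\le n_d$, and recall that $n=1+\sum_i n_i$; by hypothesis $d\ge5$, or else $d=4$ and $n_1\ge 2$. The one structural fact I would use throughout is that in \emph{any} connected partition of $G$, the block $B$ containing $v$ meets each $V_i$ in a set of size $c_i$ (so $|B|=1+\sum_i c_i$), while every other block, being connected in $G-v$, lies inside a single $V_i$ and hence has size at most $n_i\le n_d$; moreover the blocks lying inside $V_i$ form a connected partition of the $(n_i-c_i)$-vertex graph $G[V_i\setminus B]$. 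This gives two ways to certify that a partition $\lambda\vdash n$ is \emph{not} realised as a type: (a) if $\lambda$ has two parts $\ge n_d+1$, since only $B$ can exceed $n_d$; and (b) if, for some $j\ge 1$, every part of $\lambda$ lies in the interval $\{\,j+1,\dots,N_{\le j}\,\}$ where $N_{\le j}:=\sum_{i:\,n_i\le j}n_i$ — for then every part exceeds $j$, so each $V_i$ with $n_i\le j$ is entirely inside $B$, forcing $|B|\ge 1+N_{\le j}>N_{\le j}\ge\max\lambda$, which is impossible since $B$ is a block.

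The argument then splits on whether $n_d$ dominates. \emph{If $n_1+\cdots+n_{d-1}\ge n_d+1$}, then $n\ge 2(n_d+1)$, so $\lambda=(n_d+1,\,n_d+1,\,1^{\,n-2n_d-2})$ is a partition of $n$ of the shape forbidden by (a), and we are done. \emph{If instead $n_d\ge n_1+\cdots+n_{d-1}=:s$} (so $n=1+s+n_d\ge 1+2s$), I would invoke (b): it suffices to exhibit $j$ with $N_{\le j}\ge j+2$ for which $n$ can be written as a sum of integers from $I_j=\{j+1,\dots,N_{\le j}\}$, and then take $\lambda$ to be any such expression. The numerical input here is elementary: an interval $\{a,\dots,b\}$ with $b>a$ represents every integer in $[ka,kb]$ for each $k\ge1$, hence represents every integer $\ge a$ as soon as $b\ge 2a-1$, and in general every integer that is somewhat larger than $a^2/(b-a)$; so I only need $I_j$ to be wide enough relative to $n$.

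The heart of the matter is the choice of $j$, and this is where I expect the real work and the main obstacle to lie. One takes $j$ as small as the component sizes permit: if at least three of the $V_i$ are singletons then $j=1$ already works, since $N_{\le1}\ge3$ makes $I_1$ ``fat''; otherwise one steps up through $j=2,3,\dots$ according to how much mass is carried by the components of size $\le 2$, $\le 3$, and so on, with $j=n_{d-1}$ available as a last resort (where $N_{\le j}=s$ and $I_j$ has width $n_1+\cdots+n_{d-2}$). The hypothesis ($d\ge5$, or $d=4$ with every $n_i\ge2$) is precisely what should guarantee that some admissible $j$ exists, but making this bookkeeping airtight is delicate: the awkward regime is when $n_d$ strictly dominates \emph{and} several of the remaining components are of intermediate size, so that no very small $j$ yields a nonempty $I_j$ while the larger candidate thresholds $j\approx n_{d-2},n_{d-1}$ give intervals whose width is only linear in $j$ whereas $n$ is only known to be linear in $s$. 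Ruling out the possibility that every $j$ fails — in practice, showing that a threshold somewhat below $n_{d-1}$ can always be used once $n_{d-1}$ is large, by arguing that the relevant interval is in fact wide enough, or by producing a different $j$ — is the step I would budget the most effort for; the structural fact, the criteria (a) and (b), and the non-dominant case are all routine.
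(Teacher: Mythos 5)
Your two criteria are both correct, but they are too weak: there are graphs satisfying the hypothesis of the theorem for which \emph{neither} (a) nor (b) can produce a bad $\lambda$, so the strategy cannot be completed as stated. A concrete counterexample is the graph whose cut vertex $v$ joins components of sizes $\{2,10,30,42\}$, so $n=85$. For (a) you would need two parts of size $\ge 43$, which already exceeds $85$. For (b) the only candidate intervals $\{j+1,\dots,N_{\le j}\}$ of positive width are $\{11,12\}$ (at $j=10$), $\{31,\dots,42\}$ (at $j=30$) together with its right subintervals, and $\{43,\dots,84\}$ (at $j=42$); one checks that $85$ is not a nonnegative integer combination of $\{11,12\}$ (since $85\equiv 8\pmod{11}$ forces at least eight $12$'s), that $85$ lies strictly between $2\cdot 42=84$ and $3\cdot 31=93$, and that $85$ exceeds $84$ while two parts $\ge 43$ would overshoot. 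So every $j$ fails. This is exactly the ``delicate bookkeeping'' you flagged as the likely obstacle, and it is a genuine one: the regime $2c\le b\le c^2/2$ (here $b=30$, $c=12$) is where intervals of this fixed form simply cannot be wide enough relative to $n$.

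The missing idea is that the useful criterion is not ``all parts of $\lambda$ lie in an interval'' but ``every rearrangement of $\lambda$ has a partial sum in the interval $\{b+1,\dots,b+c\}$,'' where $b$ is the second-largest and $c$ the total of the remaining small components (Lemma~\ref{lem:partsums} of the paper). Once all small components are absorbed into the block $B$ containing $v$, the remaining blocks split into those inside the largest component and those inside the second-largest, and listing sizes in that order gives a rearrangement of $\lambda$ that skips over $\{b+1,\dots,b+c\}$; so a $\lambda$ all of whose rearrangements hit that interval is unrealizable. The crucial flexibility this buys is Lemma~\ref{lem:q}: pick a scaling parameter $q$ and take all parts of $\lambda$ in $J=\{\lceil(b+1)/q\rceil,\dots,\lfloor(b+c)/q\rfloor\}$; then any $q$ consecutive parts sum to something in $\{b+1,\dots,b+c\}$. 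In the example above, $q=2$ gives $J=\{16,\dots,21\}$ and $\lambda=(17,17,17,17,17)$ does the job, even though no single-interval criterion like your (b) applies. Proving that such a $q$ always exists in the range $2c\le b\le c^2/2$ is the real work of the paper (a Frobenius-type interval lemma, computer verification for $c\le 500$, and a careful four-case estimate in Lemma~\ref{lem:analysis}); your outline correctly identifies where the difficulty is but does not contain the $q$-scaling device needed to overcome it.

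Your criterion (a), incidentally, is a nice observation that the paper does not use; it cleanly disposes of the ``no dominant component'' case in one line, whereas the paper handles that situation inside the same $b,c$ machinery. But (a) cannot help in the dominant case, and (b) alone cannot cover the hard middle range, so as written the proposal has a real gap.
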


In particular, if $T$ is a tree with a vertex of degree at least $5$, or with a vertex of degree $4$ that is not adjacent to any leaf, then $T$ is not $e$-positive.

We will show in Proposition \ref{prop:6mhas} that there exists an infinite family of trees with a vertex of degree $4$ that is adjacent to a leaf, and which have a connected partition of every type $\lambda\vdash n$. Therefore, in this sense, Theorem \ref{thm:main} cannot be improved. This means that to prove non-$e$-positivity of trees with a vertex of degree $4$ in general, we need to know more about the tree to calculate particular coefficients. In the specific case of \emph{spiders}, which are graphs $S(\mu)$ formed by joining paths of lengths $\mu_1,\ldots,\mu_\ell$, the \emph{legs} of the spider, at a single vertex, we will prove that Conjecture \ref{conj:four} holds.

\begin{theorem}\label{thm:spider4}
Spiders with four legs are not $e$-positive.
\end{theorem}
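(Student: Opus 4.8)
The plan is to split into two regimes according to the length of the shortest leg. Write $S=S(a,b,c,d)$ with $a\ge b\ge c\ge d\ge1$, so that the centre $v$ has degree $4$ and $S$ has $n=a+b+c+d+1$ vertices. If $d\ge2$, then $v$ is a vertex of degree $4$ adjacent to no leaf, and deleting it produces four components $P_a,P_b,P_c,P_d$ each with at least two vertices; hence Theorem~\ref{thm:main} shows that $S$ is missing a connected partition of some type, and therefore $X_S$ is not $e$-positive. It remains to treat $S=S(a,b,c,1)$. Here Proposition~\ref{prop:6mhas} shows that the connected-partition obstruction is genuinely unavailable for an infinite subfamily, so instead I would exhibit a strictly negative coefficient in the $e$-expansion of $X_S$ directly.

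For this I would use Stanley's power-sum expansion $X_S=\sum_{F\subseteq E(S)}(-1)^{|F|}p_{\lambda(F)}$, where $\lambda(F)$ records the sizes of the connected components of $(V(S),F)$, and exploit that $S$ is a tree by cutting $F$ one leg at a time: on a leg of length $a_i$ the edges of $F$ nearest $v$ form a path of some length $j_i$ with $0\le j_i\le a_i$, and when $j_i<a_i$ the rest of that leg is a detached path $P_{a_i-j_i}$ whose edges are free. This gives
\[
X_{S(a_1,\dots,a_k)}=\sum_{0\le j_i\le a_i}(-1)^{\,j_1+\cdots+j_k}\,p_{\,1+j_1+\cdots+j_k}\prod_{i:\,j_i<a_i}X_{P_{a_i-j_i}},
\]
with the convention $X_{P_0}=1$. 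Guided by the smallest four-legged spider $K_{1,4}=S(1,1,1,1)$, for which $[e_{2,2,1}]X_{K_{1,4}}=-3$ (and by the claw, where $[e_{2,2}]X_{K_{1,3}}=-2$), I would track the coefficient of $e_{2,2,1^{\,n-4}}=e_2^2e_1^{\,n-4}$. Since that monomial has every part at most $2$, only the parts of the factors $p_m$ and $X_{P_m}$ supported on partitions into $1$'s and $2$'s can contribute; these parts satisfy a simple linear recurrence (the relevant part of $p_m$ is the $t^{m-1}$-coefficient of $(e_1-2e_2t)/(1-e_1t+e_2t^2)$, and similarly for $X_{P_m}$), so collecting everything should express $[e_2^2e_1^{\,n-4}]X_{S(a,b,c,1)}$ as an explicit polynomial in $a,b,c$.

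The heart of the argument is then to prove that this polynomial is strictly negative for every $a\ge b\ge c\ge1$; by Proposition~\ref{prop:6mhas} no structural shortcut can replace this estimate. I would organise it by fixing the two smaller leg lengths: the star $K_{1,4}$ ($a=b=c=1$) and the brooms $S(a,1,1,1)$ ($b=c=1$) can be handled on their own, and for $c=1<b$ and for $c\ge2$ I expect the coefficient to be eventually decreasing in $a$, which would leave only finitely many base cases. The two things I would want to pin down first are that $e_2^2e_1^{\,n-4}$ is genuinely a uniform witness for all four-legged spiders with a unit leg --- a poor choice of witness partition would force the whole computation to be redone --- and that the power-sum-to-$e$ bookkeeping stays manageable as the trivial part $1^{\,n-4}$ grows with the long legs; this transition is where I expect the main difficulty to lie.
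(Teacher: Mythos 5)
Your first case is fine: for $S(a,b,c,d)$ with $d\ge2$, Theorem~\ref{thm:main} applies and gives the missing connected partition. But the case $d=1$ is where the entire difficulty of the theorem lives, and there you have only a plan, not a proof. You never compute $[e_{2}^{2}e_{1}^{\,n-4}]X_{S(a,b,c,1)}$, never prove it is negative, and you yourself flag that you have not verified that this partition is ``genuinely a uniform witness.'' It almost certainly is not: the paper's own data point $S(4,1,1)$ (three legs, $n=7$) has $[e_{2,2,1,1,1}]X_{S(4,1,1)}=0$ --- the term is absent from the displayed expansion, and the negative coefficient there sits at $e_{322}$ instead. Coefficients of partitions with a long tail of $1$'s tend to die off as the legs grow, so a single witness of the form $e_{2,2,1^{\,n-4}}$ for all $S(a,b,c,1)$ is very unlikely to survive; the witness partition genuinely has to move with the leg lengths. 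Since you identified this as the step you would ``want to pin down first,'' the proof as written does not close.

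It is also worth noting that the paper does not split on $d$ at all. Parts 1--3 of Theorem~\ref{thm:main2} only require four components (the condition $c\ge c_1+1$), not four components of size at least two, so they already dispose of every four-legged spider with $b\le\frac{c^2}{2}$ --- including many with a unit leg, such as $S(3,3,2,1)$. The remaining regime $b\ge\frac{c^2}{2}$ forces $n\ge 2b+c+1\ge c^2+c+1$, i.e.\ the two shortest legs are very short relative to $n$, and the paper finishes by citing Zheng's Corollary~4.6 for exactly those spiders. If you want a self-contained argument for the $d=1$ case you would essentially have to reprove that result of Zheng, and Proposition~\ref{prop:6mhas} tells you that for $S(6m,6m-2,1,1)$ no connected-partition obstruction exists, so some explicit coefficient computation is unavoidable there --- but it has to be the right coefficient, and you have not identified or bounded it.
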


\section{Background}\label{section:background}

\begin{figure}
\caption{\label{fig:conpar} A connected partition of $S(6,4,1,1)$ of type $\lambda=(\textcolor{teal}5,\textcolor{magenta}4,\textcolor{cyan}3,\textcolor{orange}1)$}$$
\begin{tikzpicture}
\draw [dashed] (2,0)--(4,0) (6,0)--(7,0);
\node at (-1.5,0) {$S(6,4,1,1)=$};
\filldraw (0,0) [color=cyan] circle (3pt) node[align=center,below] (1){};
\filldraw (1,0) [color=cyan] circle (3pt) node[align=center,below] (2){};
\filldraw (2,0) [color=cyan] circle (3pt) node[align=center,below] (3){};
\filldraw (3,0) [color=orange] circle (3pt) node[align=center,below] (4){};
\filldraw (4,0) [color=teal]circle (3pt) node[align=center,below] (5){};
\filldraw (5,0) [color=teal]circle (3pt) node[align=center,below] (6){};
\filldraw (5.5,0.866) [color=teal] circle (3pt) node[align=center,above] (7){};
\filldraw (6,0) [color=teal] circle (3pt) node[align=center,below] (8){};
\filldraw (6.5,0.866) [color=teal]circle (3pt) node[align=center,above] (9){};
\filldraw (7,0) [color=magenta]circle (3pt) node[align=center,below] (10){};
\filldraw (8,0) [color=magenta]circle (3pt) node[align=center,below] (11){};
\filldraw (9,0) [color=magenta]circle (3pt) node[align=center,below] (12){};
\filldraw (10,0) [color=magenta]circle (3pt) node[align=center,below] (13){};
\draw [color=cyan](0,0)--(2,0);
\draw [color=teal] (4,0)--(6,0) (6,0)--(5.5,0.866) (6,0)--(6.5,0.866);
\draw[color=magenta] (7,0)--(10,0);
\end{tikzpicture}$$\end{figure}
%\begin{figure}\caption{\label{fig:missing} The graph $H$ is missing a connected partition of type $\lambda=(2,2)$}$$
%\begin{tikzpicture}
%\node at (3,-2) {$H=$};
%\filldraw (4,-2) circle (3pt) {};
%\filldraw (5,-2) circle (3pt) {};
%\filldraw (6,-2) circle (3pt) {};
%\filldraw (5,-1) circle (3pt) {};
%\draw (4,-2)--(6,-2) (5,-2)--(5,-1);
%\end{tikzpicture}$$
%\end{figure}

A \emph{graph} $G=(V,E)$ consists of a set $V$ of \emph{vertices} and a set $E$ of \emph{edges}, which are sets of two vertices. In particular, we do not allow \emph{loops}, which are edges joining a vertex to itself, or \emph{parallel edges}, which are edges joining the same pair of vertices. We will always use $n$ to denote the \emph{order} of $G$, which is the number of vertices. For a subset $S\subseteq V$, the \emph{induced subgraph} is $G[S]=(S,\{e=vw\in E: \ v,w\in S\})$.

We say that $G$ is \emph{connected} if for every $v,w\in V$, there is a \emph{path} from $v$ to $w$, which is a sequence of vertices $(v=v_1,v_2\ldots,v_{k-1},v_k=w)$ with every $v_iv_{i+1}\in E$. A \emph{connected component} of $G$ is a maximal subset $S\subseteq V$ such that $G[S]$ is connected. A vertex $v\in V$ of a connected graph is a \emph{cut vertex} if the graph $G\setminus v$ obtained by deleting $v$ and all incident edges is not connected.

A \emph{tree} $T=(V,E)$ is a minimally connected graph, meaning that for every $e\in E$, the graph $(V,E\setminus \{e\})$ is not connected. The \emph{degree} of a vertex $v$ is the number of edges incident to it and a \emph{leaf} of a tree is a vertex of degree one. Note that in a tree, every vertex is a leaf or a cut vertex. If $v$ is a cut vertex of degree $d$, then $T\setminus v$ has $d$ connected components.

A \emph{composition of $n$} is a sequence of positive integers $\alpha=(\alpha_1,\ldots,\alpha_\ell)$ with sum $n$. An \emph{(integer) partition of $n$} is a weakly decreasing composition $\lambda=(\lambda_1,\ldots,\lambda_\ell)$ of $n$. These integers $\lambda_i$ are called the \emph{parts} of $\lambda$. We write $\lambda\vdash n$ to mean that $\lambda$ is an integer partition of $n$. We now define the main object of study in this paper.
\begin{definition}
Let $G=(V,E)$ be an $n$-vertex graph and let $\lambda=(\lambda_1,\ldots,\lambda_\ell)\vdash n$. A \emph{connected partition of $G$ of type $\lambda$} is a set partition $\mathcal S=\{S_1,\ldots,S_\ell\}$ of $V$ so that
\begin{itemize}
\item the induced subgraph $G[S_i]$ is connected for every $1\leq i\leq\ell$, and
\item $|S_i|=\lambda_i$ for every $1\leq i\leq\ell$.
\end{itemize} 
We say that $G$ is \emph{missing} a connected partition of type $\lambda$ if no such $\mathcal S$ exists.
\end{definition}

\begin{example}
The \emph{spider graph} $G=S(6,4,1,1)$ is shown in Figure \ref{fig:conpar}. We have indicated a connected partition of $G$ of type $\lambda=(5,4,3,1)\vdash 13$. We will see in Proposition \ref{prop:6mhas} that $G$ has a connected partition of every type $\lambda\vdash 13$. But by Theorem \ref{thm:spider4}, $G$ still fails to be $e$-positive. This shows that Wolfgang's necessary condition for $e$-positivity is not sufficient.
\end{example}

\begin{example}
If $n$ is even, a connected partition of $G=(V,E)$ of type $\lambda=(2,2,\ldots,2,2)$ is equivalent to a \emph{perfect matching} of $G$, which is a subset $U\subseteq E$ such that every vertex is in exactly one edge of $U$.
\end{example}

\section{Proof of main result}\label{section:main}

\begin{figure}\caption{\label{fig:graphG} A graph $G$ with a cut vertex $v$ joining at least $3$ connected components}
\vspace{5pt}
\begin{tikzpicture}
\filldraw (0,0) circle (3pt) node[align=center,below] (1){$v$};
\node[ellipse, minimum height=10mm,minimum width=20mm,draw] at (2,0) {$b$};
\draw (-1,1) circle (10pt) node {$c_1$};
\draw (0,1) circle (10pt) node {$\ldots$};
\draw (1,1) circle (10pt) node {$c_k$};
\node[ellipse, minimum height=10mm,minimum width=20mm,draw] at (-2,0) {$a$};
\node[ellipse, minimum height=10mm, minimum width=40mm,draw, dotted] at (0,1){};
\draw [dotted] (2,1)--(2.5,1);
\draw node at (2.5,1) {$c$};
\draw node at (-4,0) {$G=$};
\draw (0,0)--(1,0) (0,0)--(1.3,0.35) (0,0)--(1.3,-0.35) (0,0)--(-1,0) (0,0)--(-1.3,0.35) (0,0)--(-1.3,-0.35) (0,0)--(0,0.66) (0,0)--(0.1,0.68) (0,0)--(-0.1,0.68) (0,0)--(0.75,0.75) (0,0)--(0.66,0.84) (0,0)--(0.85,0.66) (0,0)--(-0.75,0.75) (0,0)--(-0.66,0.84) (0,0)--(-0.85,0.66);
\end{tikzpicture}
\end{figure}

We first fix notation that we will use throughout this section.

\begin{notation}\label{notation}
Let $G$ be an $n$-vertex connected graph with a cut vertex $v$ whose deletion produces at least $3$ connected components, and denote their orders
\begin{equation*}
a\geq b\geq c_1\geq\cdots\geq c_k\geq 1,\text{ where }k\geq 1.
\end{equation*} See Figure \ref{fig:graphG}. Let $c=c_1+\cdots+c_k$. We will always assume that $c\geq 2$. Note that
\begin{equation*}
n=a+b+c+1\geq 2b+c+1.
\end{equation*}
We will always use $\lambda$ to denote an integer partition of $n$. 
\end{notation}
We now restate our main result using this notation.

\begin{theorem}\label{thm:main2}
Let $G$, $a$, $b$, $c$, $c_1$, and $n$ be as in Notation \ref{notation}. Suppose that $c\geq 2$. If any of the following hold, then $G$ is missing a connected partition of some type $\lambda\vdash n$.
\begin{enumerate}
\item We have $b\leq 2c-2$.
\item We have $b=2c-1$ and $c\geq c_1+1$.
\item We have $2c\leq b\leq\frac{c^2}2$.
\item We have $b\geq\frac{c^2}2$ and $c\geq c_1+2$.
\end{enumerate}
In particular, if $c\geq c_1+2$, then no matter which range $b$ lies in, we can conclude that $G$ is missing a connected partition of some type $\lambda\vdash n$.
\end{theorem}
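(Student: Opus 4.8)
The strategy is to reduce Theorem~\ref{thm:main2} to a purely arithmetic statement about partitions and then, in each of the four regimes, exhibit a single type $\lambda\vdash n$ realizable by no connected partition of $G$; the ``in particular'' clause will follow by a short case check. For the reduction: if $\mathcal S$ is a connected partition of $G$ of type $\lambda$, exactly one block $S_0$ contains $v$, and since deleting $v$ separates the components, every other block lies inside one of $A,B,C_1,\dots,C_k$ (sizes $a,b,c_1,\dots,c_k$), connectedly. Recording $|S_0|$ as a part $\lambda_j$ and sorting the remaining parts by the component of their block decomposes the multiset $\lambda\setminus\{\lambda_j\}$ into groups $M_A,M_B,M_{C_1},\dots,M_{C_k}$ with $\text{sum}(M_A)\le a$, $\text{sum}(M_B)\le b$ and $\text{sum}(M_{C_i})\le c_i$; equivalently, some rearrangement of $\lambda$ has partial sums jumping by at most $a,b,c_1,\dots,c_k$ between suitable cut points. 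So it suffices to produce one $\lambda\vdash n$ admitting no such decomposition: such a $\lambda$ is missing for every graph $G$ with the given cut-vertex structure, independently of the components' internal structure.

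\textbf{Windowing.} I would take $\lambda$ with all parts in a short interval. (i) If every part exceeds $c_1$ then every $M_{C_i}$ is empty, so each $C_i$ lies in $S_0$ and $\lambda_j=|S_0|\ge c+1$; if also every part is at most $c$ this is impossible, so no type with all parts in $\{c_1+1,\dots,c\}$ is realizable. (ii) If every part exceeds $b$ then $M_B$ and every $M_{C_i}$ are empty, so $\text{sum}(\lambda\setminus\{\lambda_j\})\le a$ forces $\lambda_j\ge n-a=b+c+1$; if also every part is at most $b+c$ this is impossible, so no type with all parts in $\{b+1,\dots,b+c\}$ is realizable. A cruder remark settles small $a$: a part exceeding $a$ can only be $S_0$, so if $a<b+c$ then $n\ge 2(a+1)$ and $(a+1,a+1,\dots)$ is unrealizable; hence we may assume $a\ge b+c$. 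For intermediate $b$, refine (ii): if every part lies in $\{\lfloor b/2\rfloor+1,\dots,\lfloor(b+c)/2\rfloor\}$ and $b\ge 2c$, then every $M_{C_i}$ is empty and at most one non-$S_0$ part lies in $B$, so $\text{sum}(M_A)\ge n-\lambda_1-\lambda_2\ge n-(b+c)=a+1>a$, again impossible.

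\textbf{Arithmetic and the case check.} What remains is, in each regime, to realize the relevant window: to write $n=a+b+c+1$ as a partition with all parts in the chosen interval. This is a Chicken--McNugget question --- a window $\{r,\dots,s\}$ with $s>r$ contains two consecutive integers, so the length-$m$ blocks $[mr,ms]$ overlap once $m(s-r)\ge r$, making every $n\ge\lceil r/(s-r)\rceil\,r$ representable --- tuned against $n\ge 2b+c+1$. For $b\le 2c-2$ the window $\{b+1,\dots,b+c\}$ works, since $2c\ge b+2$ forces overlap already at $m=3$. For $b=2c-1$ with $c\ge c_1+1$ one uses $\{c_1+1,\dots,c\}$ when $c\ge c_1+2$, and when $c=c_1+1$ (the window degenerating to $\{c\}$) a divisibility argument takes over: the forced values of $\text{sum}(M_A)$ cannot be multiples of $c$. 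For $2c\le b\le\tfrac{c^2}2$ one uses $\{\lfloor b/2\rfloor+1,\dots,\lfloor(b+c)/2\rfloor\}$ (or a closely related construction), the bound $\tfrac{c^2}2$ being exactly what keeps $n$ above the representability threshold of this narrow window. For $b\ge\tfrac{c^2}2$ with $c\ge c_1+2$ one uses $\{c_1+1,\dots,c\}$, whose Frobenius number is at most roughly $c^2-3c$, below $n\ge 2b+c+1\ge c^2+c+1$. Finally, once $c\ge c_1+2$, conditions (1)--(4) cover all $b$: use (1) if $b\le 2c-2$; (2) if $b=2c-1$ (its side condition $c\ge c_1+1$ holds); (3) if $2c\le b\le\tfrac{c^2}2$; and otherwise $b\ge 2c$ and $b>\tfrac{c^2}2$, so (4) applies (its side condition $c\ge c_1+2$ holds). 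This gives the ``in particular'' statement.

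\textbf{Main obstacle.} The reduction and windowing are short and structural; the work lies in the arithmetic, namely verifying for \emph{every} admissible $(a,b,c_1,\dots,c_k)$ that $n$ is representable in the chosen window. This forces delicate boundary analysis when the window is narrow ($c$ barely above $c_1$, where divisibility or partial-sum arguments must substitute) and when $n$ is small ($a$ close to $b$ with $c$ small, where one checks short block-lengths by hand), and it is precisely this analysis that pins down the thresholds $2c-2$, $2c-1$ and $c^2/2$. I expect regime (3) --- intermediate $b$, where neither base window is automatically available --- to be the hardest, and to be the place where the ``partial sums of rearrangements'' machinery is genuinely needed beyond the elementary counting that suffices in the other regimes.
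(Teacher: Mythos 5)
Your reduction and your two basic windows are sound and essentially coincide with the paper's machinery: the observation that a type with all parts in $\{c_1+1,\ldots,c\}$ or all parts in $\{b+1,\ldots,b+c\}$ is unrealizable is Lemma \ref{lem:partsums} together with Lemma \ref{lem:q} at $q=1$, your halved window is Lemma \ref{lem:q} at $q=2$, and your treatment of Part 4 and of the final ``in particular'' case check is correct. (Two small slips: the overlap condition for the interval Frobenius bound is $m(s-r)\geq r-1$, which gives threshold $2(b+1)\leq n$ in Part 1 rather than your $3(b+1)$, which need not be $\leq n$; and the aside reducing to $a\geq b+c$ fails because $(a+1,a+1,\ldots)$ is a partition of $n$ only when $(a+1)\mid n$.) The first genuine gap is Part 2: for $b=2c-1$ the window $\{c_1+1,\ldots,c\}$ need not contain a partition of $n$, since its representability threshold $\lceil c_1/(c-c_1-1)\rceil(c_1+1)$ equals $(c-1)(c-2)\approx c^2$ when $c_1=c-2$, while $n$ can be as small as $2b+c+1=5c-1$; concretely, with $c_1=18$, $c_2=2$, $b=39$, $a=41$, the integer $n=101$ is not a sum of $19$'s and $20$'s. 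The paper instead uses the $q=2$ window $\{c,\ldots,\lfloor(3c-1)/2\rfloor\}$, of width about $c/2$ and threshold about $4c\leq n$, which also covers $c=c_1+1$, where your degenerate window and unspecified divisibility argument give nothing.

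The more serious gap is Part 3, which is the technical heart of the theorem. Your claim that the bound $b\leq\frac{c^2}2$ ``keeps $n$ above the representability threshold'' of the $q=2$ window is quantitatively false: that window has $x\approx b/2$ and width $\approx c/2$, hence threshold $\approx b^2/(2c)$, which exceeds the worst case $n=2b+c+1$ as soon as $b$ is much larger than $4c$; at $b\approx\frac{c^2}2$ the threshold is $\approx c^3/8$ against $n\approx c^2$. No single fixed $q$ works across $2c\leq b\leq\frac{c^2}2$. The paper's proof chooses $q$ adaptively --- roughly $q=\lfloor b/c\rfloor$ when $b\lesssim c^2/4.2$ and $q\approx\sqrt{b/3.5}$ when $b$ is near $c^2/2$, in the latter regime perturbing $q$ by $1$ to force $y-x\geq 2$ despite the floor/ceiling loss $y-x\geq\frac cq-2$ --- and supplements this with a finite computer verification for all $c\leq 500$. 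Without an argument of this kind your Part 3, and hence the theorem, is not established; ``a closely related construction'' is precisely the missing content.
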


The proof of Theorem \ref{thm:main2} is structured as follows. We begin by reformulating connected partitions of type $\lambda$ in terms of partial sums of rearrangements of $\lambda$ (Lemma \ref{lem:partsums}), which we use to prove Part (4). We then describe a strategy for choosing the parts of $\lambda$ (Lemma \ref{lem:q}), which will prove Parts (1) and (2). To prove Part (3), we will need a technical result to establish an inequality (Lemma \ref{lem:analysis}). Our estimates will only be valid if $c$ is sufficiently large, namely $c\geq 500$, but we can check the cases with $2\leq c\leq 499$ by computer (Lemma \ref{lem:smallc}). Sage code used to check these cases is provided in the Appendix. 

\begin{remark} The condition that $c\geq c_1+2$ is precisely that the deletion of $v$ produces either at least $5$ connected components, or $4$ connected components that each have at least two vertices, so Theorem \ref{thm:main2} implies Theorem \ref{thm:main}. We cannot remove this hypothesis because the spider graph $S(6,4,1,1)$ from Figure \ref{fig:conpar} has a connected partition of every type $\lambda\vdash 13$. Incidentally, it will follow from Theorem \ref{thm:spider4} that the graph $S(6,4,1,1)$ is not $e$-positive. \end{remark}

\begin{remark} The condition that $c\geq c_1+1$ is precisely that the deletion of $v$ produces at least $4$ connected components. We cannot remove this hypothesis because the spider graph $S(5,3,2)$ has a connected partition of every type $\lambda\vdash 11$ and is in fact $e$-positive.\end{remark}

\begin{remark}
We cannot remove the hypothesis that $c\geq 2$ because the family of spider graphs $S(m+1,m,1)$ all have a connected partition of every type $\lambda\vdash 2m+3$ and are in fact $e$-positive \cite[Theorem 3.2]{eposclawcon}.
\end{remark}

We begin by describing another way to think about connected partitions. A composition $\alpha=(\alpha_1,\ldots,\alpha_\ell)$ is a \emph{rearrangement of $\lambda$} if each positive integer occurs the same number of times in $\alpha$ and in $\lambda$. We denote by
\begin{equation*}
\text{set}(\alpha)=\{\alpha_1,\alpha_1+\alpha_2,\ldots,\alpha_1+\cdots+\alpha_{\ell-2},\alpha_1+\cdots+\alpha_{\ell-1}\}
\end{equation*}
the set of (proper) \emph{partial sums} of $\alpha$.

\begin{example}
The composition $\alpha=(3,1,5,4)$ is a rearrangement of $\lambda=(5,4,3,1)$ and has set of partial sums $\text{set}(\alpha)=\{3,4,9\}$.
\end{example}

\begin{lemma}\label{lem:partsums} Let $G$, $a$, $b$, $c$, $c_1$, $n$, and $v$ be as in Notation \ref{notation}. Let $\lambda$ be a partition of $n$ with all parts at least $(c_1+1)$. If every rearrangement $\alpha$ of $\lambda$ has a partial sum $s\in\text{set}(\alpha)$ that lies in the interval $I=\{b+1,\ldots,b+c\}$, then $G$ is missing a connected partition of type $\lambda$. 
\end{lemma}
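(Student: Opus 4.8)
The plan is to argue by contradiction: suppose $\mathcal S=\{S_1,\ldots,S_\ell\}$ is a connected partition of $G$ of type $\lambda$, and build from it a rearrangement $\alpha$ of $\lambda$ that has \emph{no} partial sum in the interval $I=\{b+1,\ldots,b+c\}$, contradicting the hypothesis. The key structural observation is that $v$ lies in exactly one block, say $S_j$, and since each $G[S_i]$ is connected, every block $S_i$ with $i\neq j$ must lie entirely inside one of the connected components of $G-v$ (it cannot meet two components, as those are separated by $v$, which is not in $S_i$). So each block other than $S_j$ is "assigned" to one component, of size $a$, $b$, or one of the $c_1,\ldots,c_k$. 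Since every part of $\lambda$ is at least $c_1+1$, no block can fit inside any of the components of sizes $c_1,\ldots,c_k$; hence \emph{every} block other than $S_j$ lies inside the big component (size $a$) or the second component (size $b$), and the block $S_j$ containing $v$ must absorb all of the $c$ vertices in the small components together with $v$ itself, so $|S_j|\geq c+1$.

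Next I would read off the resulting inequality. Let the blocks lying inside the size-$b$ component be $S_{i_1},\ldots,S_{i_r}$ with total size $B\le b$; since they must be disjoint from $v$ and from the size-$a$ part, and $S_j$ already uses up $\geq c+1$ vertices outside that component, we get $B\leq b$ and moreover the block $S_j$ has size $\geq c+1$ while everything not in the size-$b$ component and not in $S_j$ sits in the size-$a$ component. Now form the composition $\alpha$: list first the parts of $\lambda$ corresponding to the blocks inside the size-$b$ component, in any order; then the part $\lambda_j=|S_j|$; then the remaining parts (the blocks inside the size-$a$ component), in any order. I claim no partial sum of $\alpha$ lies in $I$. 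The partial sums before $\lambda_j$ is appended are all $\leq B\leq b$, hence $\leq b$, so not in $I$. Once $\lambda_j$ is added, the running total is $B+|S_j|\geq 0+(c+1)$; more importantly every later partial sum is $\geq |S_j|\geq c+1$, but I need it to \emph{exceed} $b+c$, i.e. to jump over $I$ entirely. The partial sum just after appending $\lambda_j$ is $B+\lambda_j = n - A$ where $A\leq a$ is the total size of the size-$a$ blocks; the remaining partial sums only increase from there up to $n-(\text{smallest }a\text{-block})$. So it suffices that $B+\lambda_j\geq b+c+1$, equivalently $n-A\geq b+c+1$, i.e. $A\leq n-b-c-1 = a$, which holds. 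Hence the partial sum at the moment $\lambda_j$ is added is already $\geq b+c+1>b+c$, and all subsequent ones are larger, while all prior ones are $\leq b<b+1$; so $\alpha$ skips the interval $I$, contradicting the hypothesis.

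I should be careful about two boundary issues, which I expect to be the only real friction points. First, the partial sums of $\alpha$ only include the \emph{proper} partial sums $\mathrm{set}(\alpha)$, so I never need to worry about the total $n$ itself; and if the size-$b$ component contains no block at all then $B=0$, the first partial sum listed is $\lambda_j\geq c+1$, and I need $\lambda_j\geq b+c+1$, which again follows from $\lambda_j = n-A\geq n-a = b+c+1$ — so this degenerate case is fine. Second, I should confirm that such a block $S_j$ of size exactly $n-A$ with $A\le a$ genuinely forces the partial sum to clear $b+c$: this is just the computation $n-A \ge n-a = b+c+1$, using $n=a+b+c+1$. The main obstacle is therefore not any deep combinatorics but making the bookkeeping of "which block sits in which component" airtight — in particular justifying cleanly that connectivity of $G[S_i]$ together with $v\notin S_i$ confines $S_i$ to a single component of $G-v$, and that the part-size bound $\lambda_i\geq c_1+1$ rules out the small components entirely, so that the whole of $c$ is swept into the block containing $v$.
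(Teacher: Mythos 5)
Your proof is correct and takes essentially the same approach as the paper: contrapositive via a connected partition, confining every block not containing $v$ to a single component of $G-v$, noting the part-size bound forces all small components into the block with $v$, and then ordering the parts as (blocks in the size-$b$ component, block with $v$, blocks in the size-$a$ component) so the running sums jump from $\leq b$ directly to $\geq b+c+1$.
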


\begin{proof}
Suppose that $G$ has a connected partition $\mathcal S$ of type $\lambda$. We will find a rearrangement $\alpha$ of $\lambda$ that has no partial sum in $I$. Let $A,B,C_1,\ldots,C_k$ be the connected components of $G\setminus v$ whose orders are respectively $a\geq b\geq c_1\geq \cdots\geq c_k$ and let $S\in\mathcal S$ be the subset with $v\in S$. If any $C_i\nsubseteq S$, then letting $u\in C_i\setminus S$, the subset $S'\in\mathcal S$ with $u\in S'$ would have $|S'|\leq c_i\leq c_1$, which is impossible because every part of $\lambda$ is at least $(c_1+1)$. So every $C_i\subseteq S$. This means that $\mathcal S$ is of the form
$\mathcal S=\{A_1,\ldots,A_i,S,B_1,\ldots,B_j\}$
where $A_{i'}\subseteq A$ for $1\leq i'\leq i$ and $B_{j'}\subseteq B$ for $1\leq j'\leq j$. We now take $\alpha$ to be the composition
\begin{equation*}
\alpha=(|B_1|,\ldots,|B_j|,|S|,|A_1|,\ldots,|A_i|).
\end{equation*}
By definition, $\alpha$ is a rearrangement of $\lambda$, and we have
\begin{align*}
\alpha_1+\cdots+\alpha_j&=|B_1|+\cdots+|B_j|\leq |B|=b\text{ and }\\
\alpha_1+\cdots+\alpha_{j+1}&=|B_1|+\cdots+|B_j|+|S|\geq|B|+|C_1|+\cdots+|C_k|+1=b+c+1.
\end{align*}
Therefore, no partial sum of $\alpha$ lies in the interval $I=\{b+1,\ldots,b+c\}$.
\end{proof}

We also use the following result, the two-coin solution to the Frobenius coin problem, which gives a sufficient condition to find a partition $\lambda$ of $n$ with specified parts. 

\begin{theorem} \cite{frobeniuscoin} \label{thm:frobenius} Let $x$ and $y$ be positive integers with $\gcd(x,y)=1$. Then every integer $n\geq (x-1)(y-1)$ can be written in the form $n=a_1x+a_2y$ for some integers $a_1,a_2\geq 0$. 
\end{theorem}

We can now prove Part (4) of Theorem \ref{thm:main2}, the case where $c\geq c_1+2$ and $b\geq\frac{c^2}2$.

\begin{proof}[Proof of Part (4) of Theorem \ref{thm:main2}. ]
The idea is to take a partition $\lambda$ of $n$ with parts $c$ or $(c-1)$. Because $b\geq\frac{c^2}2$, we have
\begin{equation*}
n\geq 2b+c+1\geq c^2+c+1\geq (c-1)(c-2),
\end{equation*} 
so by Theorem \ref{thm:frobenius}, we can write $n=a_1c+a_2(c-1)$ and take $\lambda$ to be $a_1$ $c$'s followed by $a_2$ $(c-1)$'s. All parts of $\lambda$ are at least $c-1\geq c_1+1$. Let $\alpha$ be a rearrangement of $\lambda$. Because $c\geq c_1+2\geq 3$, we have $\alpha_1\leq c\leq\frac{c^2}2\leq b$, so there exists some maximal $i$ with $\alpha_1+\cdots+\alpha_i\leq b$, and then the partial sum $\alpha_1+\cdots+\alpha_i+\alpha_{i+1}$ satisfies
\begin{equation*}
b+1\leq\alpha_1+\cdots+\alpha_i+\alpha_{i+1}\leq b+\alpha_{i+1}\leq b+c.
\end{equation*}
Therefore the result follows from Lemma \ref{lem:partsums}.
\end{proof}

We now describe another way to ensure that all rearrangements of $\lambda$ have a partial sum in the interval $I=\{b+1,\ldots,b+c\}$.

\begin{lemma}\label{lem:q} Let $G$, $a$, $b$, $c$, $c_1$, and $n$ be as in Notation \ref{notation}. Let $q$ be a positive integer and let $J=\{x,x+1,\ldots,y-1,y\}$, where
\begin{equation}
x=\left\lceil\frac{b+1}q\right\rceil\text{ and }y=\left\lfloor\frac{b+c}q\right\rfloor.
\end{equation}
Suppose that $x\geq c_1+1$. If $\lambda$ is a partition of $n$ with all parts in the interval $J$, then $G$ is missing a connected partition of type $\lambda$.
\end{lemma} 

\begin{proof}
All parts of $\lambda$ are at least $x\geq c_1+1$ and every rearrangement $\alpha$ of $\lambda$ has
\begin{equation*}
b+1\leq qx\leq\alpha_1+\cdots+\alpha_q\leq qy\leq b+c,
\end{equation*}
so the result follows from Lemma \ref{lem:partsums}.
\end{proof}

We now generalize the result of the Frobenius coin problem by seeing which numbers can be written as a sum of numbers in an interval $J$.

\begin{lemma}\label{lem:frobint}
Let $x$ and $y$ be positive integers with $x<y$. Then there exists a partition $\lambda$ of $n$ with all parts in the interval $J=\{x,\ldots,y\}$ as long as 
\begin{equation}\label{eq:intervalcoin}
n\geq\left\lceil\frac{x-1}{y-x}\right\rceil x.
\end{equation}
\end{lemma}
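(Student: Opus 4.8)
The plan is to write $n$ greedily using as many copies of $y$ as possible and then patch up the remainder using parts from $J=\{x,\ldots,y\}$. Concretely, I would argue as follows. Let $r=y-x$, so by hypothesis $r\geq 1$, and set $m=\left\lceil\frac{x-1}{r}\right\rceil$, so that $mr\geq x-1$, i.e.\ $m$ copies of the "spread" $r$ suffice to cover any residue class modulo the allowed step size once we are past a threshold. The key observation is that if $N$ is any integer with $x\leq N$ and moreover $N\geq mx$, then $N$ can be written as a sum of parts each lying in $J$: write $N=qy-t$ with $q=\left\lceil N/y\right\rceil$ and $0\leq t\leq y-1$; then $q\geq m$ (since $N\geq mx$ and each summand is at most $y$ forces at least $\lceil N/y\rceil\geq\lceil mx/y\rceil$, but more simply $q\geq m$ because $N\geq mx\geq m(x)$ and... ) — here I need to be a little careful, so let me instead reduce $t$ across the first $q$ parts.

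The cleaner route: start from the partition consisting of $q=\left\lceil n/x\right\rceil$ copies of... no. Let me use the standard trick. Take $q$ to be the unique integer with $qx\leq n<(q+1)x$; this exists since $n\geq x$. Then $0\leq n-qx<x\leq qr$ provided $q\geq m$, because $qr\geq mr\geq x-1\geq n-qx$. So I first need $q\geq m$, which follows from $n\geq mx$ since then $q=\lfloor n/x\rfloor\geq m$. Once $q\geq m$, I have $0\leq n-qx\leq qr$, so I can distribute the surplus $n-qx$ among the $q$ parts, adding at most $r$ to each (some parts get the full $r$, at most one part gets a partial amount between $0$ and $r$), keeping every part in $\{x,\ldots,x+r\}=J$. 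Sorting these $q$ parts into weakly decreasing order gives the desired partition $\lambda\vdash n$.

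So the argument has three steps: (i) set $q=\lfloor n/x\rfloor$ and observe $q\geq m=\left\lceil\frac{x-1}{y-x}\right\rceil$ from the hypothesis $n\geq mx$; (ii) check $0\leq n-qx<x\leq q(y-x)$, the last inequality using $q\geq m$ and the definition of $m$; (iii) build $\lambda$ by starting with $q$ copies of $x$ and greedily adding up to $y-x$ to each part until the total reaches $n$, which is possible by (ii), then reorder. The main obstacle — really the only place requiring care — is step (ii): I must confirm that the ceiling in the definition of $m$ is exactly what makes $q(y-x)\geq x-1\geq n-qx$ go through, and that the edge cases ($n-qx=0$, or the bound $n\geq mx$ met with equality, or $x-1$ exactly divisible by $y-x$) are handled by the inequalities as stated rather than needing strict versions. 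Everything else is bookkeeping.
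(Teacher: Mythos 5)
Your proof is correct and essentially the same as the paper's: the paper phrases the argument as the intervals $tJ=\{tx,\ldots,ty\}$ overlapping once $t\geq\left\lceil\frac{x-1}{y-x}\right\rceil$, while you set $q=\lfloor n/x\rfloor$ and distribute the surplus $n-qx$ across the $q$ parts, but both hinge on the identical inequality $q(y-x)\geq x-1$. The one small slip, in step (ii) where you write $x\leq q(y-x)$, should read $x-1\leq q(y-x)$ (that is all $q\geq m$ gives you), but since the surplus is $n-qx\leq x-1$ this weaker bound is exactly what is needed, as you correctly state in your closing paragraph.
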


\begin{proof}
For a positive integer $t$, let $tJ=\{tx,tx+1,\ldots,ty-1,ty\}$. First note that every integer $n\in tJ$ has a partition $\lambda$ consisting of $t$ integers in $J$. Indeed, for $n=ty$ we can take $\lambda=(y,\ldots,y)$, and for $tx\leq n\leq ty-1$ we can write $n=tq+r$ for some integers $x\leq q\leq y-1$ and $0\leq r\leq t-1$, and take $\lambda$ to be $r$ $(q+1)$'s followed by $(t-r)$ $q$'s.\\

We now see that as $t$ increases, the intervals $tJ$ become closer to each other. Specifically, for $t\geq\left\lceil\frac{x-1}{y-x}\right\rceil\geq\frac{x-1}{y-x}$, we have $ty+1\geq (t+1)x$, which means that as soon as we exit the interval $tJ$, we are already in the interval $(t+1)J$. This means that every integer $n$ satisfying \eqref{eq:intervalcoin} is in some interval $tJ$ and has a partition $\lambda$ with all parts in $J$.
\end{proof}

%In fact, this argument shows that we can find an \emph{equitable} partition $\lambda\vdash n$, which means that all of the parts of $\lambda$ are $t$ or $(t+1)$ for some $t$. 

\begin{example}
Let $x=7$ and $y=9$, so that $J=\{7,8,9\}$ and we can write the numbers
\begin{equation*}
\{\underbrace{7,8,9}_J\}\cup\{\underbrace{14,15,16,17,18}_{2J}\}\cup\{\underbrace{21,22,23,24,25,26,27}_{3J}\}\cup\{\underbrace{28,29,30,31,32,33,34,35,36}_{4J}\}\cup\cdots
\end{equation*}
as a sum of numbers in $J$. We see that once $t\geq\left\lceil\frac{x-1}{y-x}\right\rceil=3$, we no longer have a gap between the intervals $tJ$ and $(t+1)J$, so we are able to express any integer $n\geq 3x=21$.
\end{example}

\begin{lemma}\label{lem:nb2c} Let $G$, $a$, $b$, $c$, $c_1$, and $n$ be as in Notation \ref{notation}. If $n\geq\left\lceil\frac b{c-1}\right\rceil(b+1)$, then $G$ is missing a connected partition of some type $\lambda\vdash n$. 
\end{lemma}

\begin{proof}
By Lemma \ref{lem:frobint}, there exists a partition $\lambda$ of $n$ with all parts in the interval $J=\{b+1,\ldots,b+c\}$. Now by Lemma \ref{lem:q} with $q=1$, $G$ is missing a connected partition of type $\lambda$.
\end{proof}

We can now prove Part (1) of Theorem \ref{thm:main2}, the case where $b\leq 2c-2$; and Part (2) of Theorem \ref{thm:main2}, the case where $c\geq c_1+1$ and $b=2c-1$. 

\begin{proof}[Proof of Part (1) of Theorem \ref{thm:main2}. ] Because $b\leq 2c-2$, we have
\begin{equation*}
\left\lceil\frac b{c-1}\right\rceil (b+1)\leq 2(b+1)\leq 2b+c+1\leq n,
\end{equation*}
so the result follows from Lemma \ref{lem:nb2c}. 
\end{proof}

\begin{proof}[Proof of Part (2) of Theorem \ref{thm:main2}. ]
First note that if $c=2$, then $b=3$, and taking $\lambda=(2,\ldots,2)$ if $n$ is even or $\lambda=(3,2,\ldots,2)$ if $n$ is odd, every rearrangement $\alpha$ has the partial sum $\alpha_1+\alpha_2$ in the interval $I=\{4,5\}$ and we are done by Lemma \ref{lem:partsums}. Now suppose that $c\geq 3$. Let $x=\left\lceil\frac{b+1}2\right\rceil=c\geq c_1+1$ and $y=\left\lfloor\frac{b+c}2\right\rfloor=\left\lfloor\frac{3c-1}2\right\rfloor$. By Lemma \ref{lem:frobint}, we can find a partition $\lambda$ of $n$ with all parts in the interval $J=\{x,\ldots,y\}$ because
\begin{equation*}
\left\lceil\frac{x-1}{y-x}\right\rceil x\leq\left\lceil\frac{c-1}{\frac{c-2}2}\right\rceil c=\left\lceil 2+\frac 2{c-2}\right\rceil c\leq 4c\leq 2b+c+1\leq n. 
\end{equation*} 
By Lemma \ref{lem:q} with $q=2$, $G$ is missing a connected partition of type $\lambda$.
\end{proof}

Our last case to prove is Part (3) of Theorem \ref{thm:main2}, the case where $2c\leq b\leq\frac{c^2}2$. Our strategy is as follows. Given integers $b$, $c$, and $n$ with $c\geq 2$, $2c\leq b\leq\frac{c^2}2$, and $n\geq 2b+c+1$, we wish to find a positive integer $q$ such that the integers 
\begin{equation}\label{eq:xy}
x=\left\lceil\frac{b+1}q\right\rceil\text{ and }y=\left\lfloor\frac{b+c}q\right\rfloor
\end{equation} satisfy that $x\geq c+1\geq c_1+1$, $x<y$, and that there exists a partition $\lambda$ of $n$ with all parts in the interval $J=\{x,\ldots,y\}$. Then $G$ will have no connected partition of type $\lambda$ by Lemma \ref{lem:q}. By Lemma \ref{lem:frobint}, a sufficient condition to find such a $\lambda\vdash n$ is if \begin{equation}\label{eq:analysisbound}
\left\lceil\frac{x-1}{y-x}\right\rceil x\leq 2b+c+1.\end{equation}

\begin{example}\label{ex:oneq}
Let $b=75$ and $c=14$, so that $n\geq 2b+c+1=165$. 
\begin{itemize}
\item For $q\geq 6$, we have $x\leq\left\lceil\frac{76}6\right\rceil=13\leq c$, so this will not work.
\item For $q=5$, we have $x=\left\lceil\frac{76}5\right\rceil=16$ and $y=\left\lfloor\frac{89}5\right\rfloor=17$. We have $x\geq c+1$, but 
\begin{equation*}
\left\lceil\frac{x-1}{y-x}\right\rceil x=240>165,
\end{equation*}
so this will not work. We cannot write $n=239$ as a sum of $16$'s and $17$'s.
\item For $q=4$, we have $x=\left\lceil\frac{76}4\right\rceil=19$ and $y=\left\lfloor\frac{89}4\right\rfloor=22$. We have $x\geq c+1$ and 
\begin{equation*}
\left\lceil\frac{x-1}{y-x}\right\rceil x=114\leq 165,
\end{equation*} so this will work. Lemma \ref{lem:frobint} tells us that there exists a partition $\lambda$ of $n$ with all parts in the interval $J=\{19,20,21,22\}$ and Lemma \ref{lem:q} tells us that $G$ will be missing a connected partition of type $\lambda$. 
\end{itemize}
\end{example}

The next example shows that there may be no single value of $q$ for which the inequality \eqref{eq:analysisbound} holds. However, for every value of $n\geq 2b+c+1$, there is some $q$ such that there exists $\lambda\vdash n$ with all parts in the interval $J=\{x,\ldots,y\}$, where $x$ and $y$ are as in \eqref{eq:xy}.

\begin{example}\label{ex:moreq}
Let $b=24$ and $c=7$, so that $n\geq 2b+c+1=56$. 
\begin{itemize}
\item For $q\geq 4$, we have $x\leq 7\leq c$, so this will not work.
\item For $q=3$, we have $x=9$, $y=10$, and $\left\lceil\frac{x-1}{y-x}\right\rceil x=72>56$, so this will not work.
\item For $q=2$, we have $x=13$, $y=15$, and $\left\lceil\frac{x-1}{y-x}\right\rceil x=78>56$, so this will not work.
\item For $q=1$, we have $x=25$, $y=31$, and $\left\lceil\frac{x-1}{y-x}\right\rceil x=100>56$, so this will not work.
\end{itemize}
We find that no single value of $q$ satisfies \eqref{eq:analysisbound}. For $n\geq 72$, we can take $q=3$, but this will not work for every $n\geq 56$. Nevertheless, for each $56\leq n\leq 71$, there is some $q$ such that there exists $\lambda\vdash n$ with all parts in the interval $J=\{x,\ldots,y\}$, where $x$ and $y$ are as in \eqref{eq:xy}.
\begin{align*}
q=3&:\{\underbrace{54,55,\textcolor{red}{56},\textcolor{red}{57},\textcolor{red}{58},\textcolor{red}{59},\textcolor{red}{60}}_{6J}\}\cup\{\underbrace{\textcolor{red}{63},\textcolor{red}{64},\textcolor{red}{65},\textcolor{red}{66},\textcolor{red}{67},\textcolor{red}{68},\textcolor{red}{69},\textcolor{red}{70}}_{7J}\}\cup\{\underbrace{\textcolor{red}{72},\textcolor{red}{73},\textcolor{red}{74},\textcolor{red}{\ldots}}_{8J}\}\cup\cdots\\
q=2&:\{\underbrace{52,53,54,55,56,57,58,59,60}_{4J}\}\cup\{\underbrace{65,66,67,68,69,70,\textcolor{red}{71},72,73,74,75}_{5J}\}\cup\cdots\\
q=1&:\{\underbrace{50,51,52,53,54,55,56,57,58,59,60,\textcolor{red}{61},\textcolor{red}{62}}_{2J}\}\cup\{\underbrace{75,76,77,\ldots}_{3J}\}\cup\cdots
\end{align*}
\end{example}

We will soon show by an analytic argument that if $c\geq 500$ and $2c\leq b\leq\frac{c^2}2$, then we can find a positive integer $q$ such that the integers $x$ and $y$ as in \eqref{eq:xy} satisfy $x\geq c+1$, $x<y$, and the inequality \eqref{eq:analysisbound}. Our estimates will not be valid for $2\leq c\leq 499$, so we check these cases by computer. For $41\leq c\leq 499$, we can always find a single value of $q$ for which \eqref{eq:analysisbound} holds, as in Example \ref{ex:oneq}. When $2\leq c\leq 40$, we will sometimes need different values of $q$ depending on $n$ to find a suitable partition $\lambda$ of $n$, as in Example \ref{ex:moreq}. Sage code used to check the following Lemma will be provided in the Appendix.

\begin{lemma}\label{lem:smallc}
Fix positive integers $b$, $c$, and $n$ with $2\leq c\leq 499$, $2c\leq b\leq \frac{c^2}2$, and $n\geq 2b+c+1$. There exists a positive integer $q$ such that the integers $x$ and $y$ as in \eqref{eq:xy} satisfy that $x\geq c+1$, $x<y$, and there exists a partition $\lambda$ of $n$ with all parts in the interval $J=\{x,\ldots,y\}$.
\end{lemma}

\begin{proof}
First suppose that $2\leq c\leq 40$. By Lemma \ref{lem:frobint}, we can take $q=1$ if $n\geq\left\lceil\frac b{c-1}\right\rceil(b+1)$, so there are finitely many triples $(b,c,n)$ to consider. For each one, we checked by computer, using the Sage code in Method \ref{method:40}, that there exists such a positive integer $q$.

Now suppose that $41\leq c\leq 499$. By Lemma \ref{lem:frobint}, there exists such a partition $\lambda$ if the inequality \eqref{eq:analysisbound} holds. There are finitely many pairs $(b,c)$ to consider. For each one, we checked by computer, using the Sage code in Method \ref{method:500}, that there exists such a positive integer $q$.
\end{proof}

We now come to the most technical part of the proof, where given any $c\geq 500$ and $2c\leq b\leq\frac{c^2}2$, we will find an integer $q$ such that the integers $x$ and $y$ as in \eqref{eq:xy} satisfy $x\geq c+1$, $x<y$, and the inequality 
\begin{equation}
\left\lceil\frac{x-1}{y-x}\right\rceil x\leq 2b+c+1.
\end{equation} We first make a remark about this inequality.

\begin{remark}
If we ignore the floors, ceilings, and ones, we have a rough approximation
\begin{equation*}
\left\lceil\frac{x-1}{y-x}\right\rceil x\approx \frac{b/q}{c/q}\frac bq=\frac{b^2}{cq}.
\end{equation*}
To minimize this, we want $q$ to be large, and to have $x\geq c+1$, we could take $q\approx \frac bc$, which would give us $\left\lceil\frac{x-1}{y-x}\right\rceil x\approx b$, so this is promising. However, the floors and ceilings mean that the denominator $(y-x)$ may be closer to $(\frac cq-2)$, and this $(-2)$ term can really hurt us if $q$ is close to $\frac c2$. Therefore, when $b$ is close to $\frac{c^2}2$, we will need to make a clever choice of $q$.
\end{remark}

\begin{lemma}\label{lem:analysis}
Fix positive integers $b$ and $c$ with $c\geq 500$ and $2c\leq b\leq\frac{c^2}2$. There exists a positive integer $q$ such that, letting
\begin{equation}
x=\left\lceil\frac{b+1}q\right\rceil\text{ and }y=\left\lfloor\frac{b+c}q\right\rfloor,
\end{equation} we have $x\geq c+1$, $x<y$, and the inequality
\begin{equation}
\left\lceil\frac{x-1}{y-x}\right\rceil x\leq 2b+c+1.
\end{equation}
\end{lemma}

\begin{proof}
We first note that
\begin{equation*}
y-x=\left\lfloor\frac{b+c}q\right\rfloor-\left\lceil\frac{b+1}q\right\rceil\geq\left(\frac{b+c}q-\frac{q-1}q\right)-\left(\frac{b+1}q+\frac{q-1}q\right)=\frac cq-2+\frac 1q\geq \frac cq-2
\end{equation*}
because the floors and ceilings round by at most $\frac{q-1}q$. We also have
\begin{equation*}
\left\lceil\frac{x-1}{y-x}\right\rceil x=\left\lceil\frac{\left\lceil\frac{b+1}q\right\rceil-1}{y-x}\right\rceil\left\lceil\frac{b+1}q\right\rceil\leq\left(\frac {b/q}{y-x}+1\right)\left(\frac bq+1\right).
\end{equation*}
Now we will have a few different cases, depending on the value of $b$.

\begin{case} $2c\leq b\leq \frac{c^2}{20}$. We let $q=\left\lfloor\frac bc\right\rfloor$, which means that $\frac bc-1\leq q\leq \frac bc$. Then
\begin{equation*}
x=\left\lceil\frac{b+1}q\right\rceil\geq\left\lceil\frac{b+1}bc\right\rceil\geq c+1.
\end{equation*}
Because $b\geq 2c$, we have $q\geq 2$ and therefore
\begin{equation*}
q\geq\frac 23(q+1)\geq \frac 23\frac bc.
\end{equation*} 
Because $b\leq \frac{c^2}{20}$, we have $q\leq\frac bc\leq\frac c{20}$, so $y-x\geq\frac cq-2\geq 18$ and 
\begin{equation*}
y-x\geq\frac {18}{20}(y-x+2)\geq\frac{18}{20}\frac cq.
\end{equation*}
Now using that $c\geq 500$, we have
\begin{align*}
\left\lceil\frac{x-1}{y-x}\right\rceil x&\leq\left(\frac {b/q}{y-x}+1\right)\left(\frac bq+1\right)\leq\left(\frac{20}{18}\frac bc+1\right)\left(\frac 32c+1\right)\leq(1.12\frac bc+1)(1.51c)\\&\leq 1.7b+0.51c+c\leq 1.7b+0.255b+c=1.955b+c\leq 2b+c+1.
\end{align*}
\end{case}

\begin{case} $\frac{c^2}{20}\leq b\leq\frac{c^2}{4.2}$. We let $q=\left\lfloor\frac bc\right\rfloor$ again, but our estimates will look a bit different. We still have $x\geq c+1$ as before. Because $b\geq\frac{c^2}{20}\geq 25c$, we have $q\geq 25$ and therefore
\begin{equation*}
q\geq \frac {25}{26}(q+1)\geq\frac{25}{26}\frac bc.
\end{equation*}
Because $b\leq\frac{c^2}{4.2}$, we have $q\leq\frac bc\leq\frac c{4.2}$. Now since $y-x$ is an integer and $y-x\geq\frac cq-2\geq 2.2$, we must have $y-x\geq 3$ and
\begin{equation*}
y-x\geq\frac 35(y-x+2)\geq \frac 35\frac cq.
\end{equation*}
Now using that $c\geq 500$, we have
\begin{align*}
\left\lceil\frac{x-1}{y-x}\right\rceil x&\leq\left(\frac{b/q}{y-x}+1\right)\left(\frac bq+1\right)\leq\left(\frac 53\frac bc+1\right)\left(\frac{26}{25}c+1\right)\leq(1.67\frac bc+1)(1.05c)\\&\leq 1.76b+0.05c+c\leq 1.76b+0.002b+c=1.762b+c\leq 2b+c+1.
\end{align*}
\end{case}

\begin{case}$\frac{c^2}{4.2}\leq b\leq\frac{c^2}{3.4}$. We let $q=\lfloor 0.46\sqrt b\rfloor$. Note that $0.487\leq \frac 1{\sqrt{4.2}}\leq\frac{\sqrt b}c\leq\frac 1{\sqrt{3.4}}\leq 0.543$. This means that
\begin{equation*}
x=\left\lceil\frac{b+1}q\right\rceil\geq\frac bq\geq \frac b{0.46\sqrt b}>\frac{\sqrt b}{0.487}\geq c,
\end{equation*}
so $x\geq c+1$ because $x$ is an integer. We also have
\begin{equation*}
c\geq\sqrt{3.4}\sqrt b\geq\frac{\sqrt{3.4}}{0.46}q\geq 4.008q,
\end{equation*}
so because $y-x$ is an integer and $y-x\geq\frac cq-2>2.008$, we must have $y-x\geq 3$ and
\begin{equation*}
y-x\geq\frac 35(y-x+2)\geq\frac 35\frac cq.
\end{equation*}
Now using that $\sqrt b\geq 0.487c\geq 0.487\cdot 500\geq 240$, we have $q\geq 0.46\sqrt b-1\geq 0.4558\sqrt b$ and
\begin{align*}
\left\lceil\frac{x-1}{y-x}\right\rceil x&\leq\left(\frac53\frac bc+1\right)\left(\frac bq+1\right)\leq\left(\frac 5{3\sqrt{3.4}}\sqrt b+1\right)\left(\frac{\sqrt b}{0.4558}+1\right)\\&\leq(0.904\sqrt b+1)(2.194\sqrt b+1)\leq (0.9082\sqrt b)(2.1982\sqrt b)\leq 1.997b\leq 2b+c+1.
\end{align*}
\end{case}

\begin{case}$\frac{c^2}{3.4}\leq b\leq\frac{c^2}2$. Let $q_0=\left\lfloor\frac{\sqrt b}{\sqrt{3.5}}\right\rfloor$. We will either take $q=q_0$ or $q=q_0-1$. We first show that for one of these values of $q$, we will have $y-x\geq 2$. Note that $\frac 1{\sqrt{3.4}}\leq\frac{\sqrt b}c\leq\frac 1{\sqrt 2}$, so
\begin{equation*}
c\geq\sqrt 2\sqrt b\geq\sqrt 2\sqrt{3.5}q_0\geq 2.64q_0.
\end{equation*} Also note that because $c\geq 500$, we have $\sqrt b\geq\frac{500}{\sqrt{3.4}}\geq 270$ and $q_0\geq\frac{270}{\sqrt{3.5}}-1\geq 140$. Let us first consider $q=q_0$. Use integer division to write $b+1=c_0q_0+r_0$ for some integers $c_0$ and $0\leq r_0\leq q_0-1$. If 
\begin{equation*}r_0\geq 0.38q_0\geq 0.37q_0+1,\end{equation*}
this will mean that $c+r_0-1\geq 2.64q_0+0.37q_0=3.01q_0>3q_0$. Therefore,
\begin{equation*}
b+c=b+1+c-1=c_0q_0+(c+r_0-1)>(c_0+3)q_0,
\end{equation*}
so at least three multiples of $q_0$, namely $(c_0+1)q_0$, $(c_0+2)q_0$, and $(c_0+3)q_0$, lie strictly between $b+1$ and $b+c$. This means that $x\leq c_0+1$ and $y\geq c_0+3$, so $y-x\geq 2$.\\

Now if $r_0<0.38q_0$, we will consider $q=q_0-1$. We have $\sqrt{3.5}q_0\leq\sqrt b\leq\sqrt{3.5}(q_0+1)$, so
\begin{equation*}
3.5q_0^2\leq b\leq 3.5q_0^2+7q_0+3.5.
\end{equation*}
Now the integer $c_0$ satisfies $b+1-q_0\leq c_0q_0\leq b+1$, so using that $q_0\geq 140$, we have
\begin{equation*}
3.49q_0^2\leq 3.5q_0^2-q_0\leq b+1-q_0\leq c_0q_0\leq b+1\leq 3.5q_0^2+7q_0+4.5\leq 3.56q_0^2.
\end{equation*}
This means that in the integer division of $c_0$ by $q_0$, we have
\begin{equation*}
c_0=3q_0+r_1,\text{ where }0.49q_0\leq r_1\leq 0.56q_0.
\end{equation*}
This allows us to calculate the integer division of $b+1$ by $q=q_0-1$ as
\begin{equation*}
b+1=c_0q_0+r_0=c_0(q_0-1)+3q_0+r_0+r_1=(c_0+3)(q_0-1)+(r_0+r_1+3),
\end{equation*}
where the remainder $r=r_0+r_1+3$ satisfies
\begin{equation*}
0.49(q_0-1)\leq r_1\leq r\leq 0.38q_0+0.56q_0+3=0.94q_0+3\leq 0.97(q_0-1).
\end{equation*}
This means that $c+r-1\geq 2.64q_0+0.49(q_0-1)-1>3(q_0-1)$. Therefore,
\begin{equation*}
b+c=b+1+c-1=(c_0+3)(q_0-1)+c+r-1>(c_0+6)(q_0-1),
\end{equation*}
so at least three multiples of $(q_0-1)$, namely $(c_0+4)(q_0-1)$, $(c_0+5)(q_0-1)$, and $(c_0+6)(q_0-1)$, lie strictly between $b+1$ and $b+c$. This means that $x\leq c_0+4$ and $y\geq c_0+6$, so $y-x\geq 2$. This proves our claim that one of $q=q_0$ or $q=q_0-1$ will have $y-x\geq 2$.\\

Now taking either $q=q_0$ or $q=q_0-1$ so that $y-x\geq 2$, we have
\begin{equation*}
x=\left\lceil\frac{b+1}q\right\rceil\geq\frac b{q_0}\geq\sqrt{3.5}\sqrt b>\sqrt{3.4}\sqrt b\geq c,
\end{equation*}
so $x\geq c+1$ because $x$ is an integer. Because $\sqrt b\geq 270$, we have 
\begin{equation*}q\geq q_0-1\geq \frac{\sqrt b}{\sqrt{3.5}}-2\geq\frac{\sqrt b}{\sqrt{3.8}}.\end{equation*} Finally, we have
\begin{align*}
\left\lceil\frac{x-1}{y-x}\right\rceil x&\leq\left(\frac b{2q}+1\right)\left(\frac bq+1\right)\leq \left(\frac {\sqrt{3.8}b}{2\sqrt b}+1\right)\left(\frac{\sqrt{3.8}b}{\sqrt b}+1\right)\\&\leq(0.975\sqrt b+1)(1.95\sqrt b+1)\leq (0.979\sqrt b)(1.954\sqrt b)\leq 1.92b\leq 2b+c+1.
\end{align*}
\end{case}
This completes the proof.
\end{proof}

Now we can prove the remaining Part (3) of Theorem \ref{thm:main2}, the case where $2c\leq b\leq\frac{c^2}2$.

\begin{proof}[Proof of Part (3) of Theorem \ref{thm:main2}. ]
If $2\leq c\leq 499$, then Lemma \ref{lem:smallc} tells us that there is an integer $q$ and a partition $\lambda$ of $n$ with all parts in the corresponding interval $J=\{x,\ldots,y\}$, where $x$ and $y$ are as in \eqref{eq:xy}. If $c\geq 500$, then Lemma \ref{lem:analysis} and Lemma \ref{lem:frobint} again give us these $q$ and $\lambda\vdash n$. Either way, now Lemma \ref{lem:q} tells us that $G$ is missing a connected partition of this type $\lambda\vdash n$. 
\end{proof}

We also prove that spiders with four legs are not $e$-positive.

\begin{proof}[Proof of Theorem \ref{thm:spider4}. ]
Let $S$ be a spider with four legs, so that $c\geq c_1+1\geq 2$. If $b\leq \frac{c^2}2$, then the result follows from Parts (1), (2), and (3) of Theorem \ref{thm:main2}. If $b\geq\frac{c^2}2$, then 
\begin{equation*}n\geq 2b+c+1\geq c^2+c+1,\end{equation*}
and Zheng showed \cite[Corollary 4.6]{spiderepos} that such a spider $S$ is not $e$-positive.
\end{proof}

In order to prove Conjecture \ref{conj:four}, it now remains to consider trees with a vertex of degree $4$ that is adjacent to a leaf, such as the spider $S(6,4,1,1)$ from Figure \ref{fig:conpar}. It turns out that $S(6,4,1,1)$ has a connected partition of every type $\lambda\vdash 13$, which shows that Wolfgang's necessary condition will not be enough. We conclude by showing that there are infinitely many such examples.

\begin{proposition}\label{prop:6mhas}
Let $m\geq 1$ be an integer. The spider $S(6m,6m-2,1,1)$ has a connected partition of type $\lambda$ for every $\lambda\vdash 12m+1$.
\end{proposition}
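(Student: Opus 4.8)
The plan is to reduce the existence of a connected partition of $S=S(6m,6m-2,1,1)$ to a purely arithmetic statement about subset sums, and then verify that statement. Write $v$ for the centre, let $A=(a_1,\dots,a_{6m})$ and $B=(b_1,\dots,b_{6m-2})$ be the two long legs indexed outward from $v$, and let $c_1,c_2$ be the leaves at $v$. The connected subsets of $S$ are exactly: a consecutive block of $a$'s or of $b$'s not containing $v$; a ``central'' set $\{v\}\cup\{a_1,\dots,a_i\}\cup\{b_1,\dots,b_{i'}\}\cup D$ with $0\le i\le 6m$, $0\le i'\le 6m-2$, $D\subseteq\{c_1,c_2\}$; or $\{c_1\}$ or $\{c_2\}$ alone. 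Since a path on $N$ vertices admits a connected partition realizing \emph{any} composition of $N$, a connected partition of $S$ of type $\lambda$ is precisely the following data: a subset $D$ (so $\lambda$ has at least $2-|D|$ parts equal to $1$, which become the singletons $\{c_k\}$ for $c_k\notin D$), a part $s\ge 1+|D|$ of $\lambda$ serving as the size of the central set, and a splitting of the remaining parts into multisets $G_A,G_B$ tiling the two leg tails. Once $D$ and $s$ are fixed one has $\sum G_A+\sum G_B=\operatorname{sum}(\lambda)-s-(2-|D|)=12m-1+|D|-s$ automatically, so the \emph{only} constraints are $\sum G_A\le 6m$ and $\sum G_B\le 6m-2$; equivalently, we must find a sub-multiset $G_A$ of $\lambda\setminus(\{s\}\cup\{1\}^{\,2-|D|})$ with $\sum G_A\in[\,6m+1+|D|-s,\ 6m\,]$.

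With this reduction I would take $s=\lambda_1$. First dispose of the easy regimes: if $\lambda$ has at least two $1$'s, take $|D|=0$, which always works — this is literally ``delete $c_1,c_2$ and cut the remaining path on $12m-1$ vertices into the other parts of $\lambda$.'' If instead $\lambda_1\ge 6m+1$ (with $|D|=2$ forced or chosen), take $G_A$ to be everything that remains, so the central set swallows an entire leg; the arithmetic of the previous paragraph makes this fit. In the remaining case $\lambda$ has at most one $1$, $\lambda_1\le 6m$, and I would use the standard greedy fact: a multiset of positive integers with largest part $\mu$ and total $\Sigma\ge U\ge 0$ has a sub-multiset summing into $[\,U-\mu+1,\ U\,]$ (list the parts decreasingly; consecutive partial sums differ by at most $\mu$). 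Applying this with $U=6m$ to $M=\lambda\setminus(\{s\}\cup\{1\}^{\,2-|D|})$, whose largest part is $\le\lambda_1$, gives a sub-multiset summing into $[\,6m-\lambda_1+1,\ 6m\,]$, which lands inside the required window $[\,6m+1+|D|-\lambda_1,\ 6m\,]$ as soon as the largest part of $M$ is at most $\lambda_1-|D|$ (and it always does when $|D|=0$).

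The delicate case is therefore when $\lambda$ has several parts clustered at the top, forcing the largest part of $M$ to equal $\lambda_1$ (or, when $|D|=2$, possibly $\lambda_1-1$), so that the greedy landing can miss the target window by $1$ or $2$. Here I would force $G_A$ to contain one, and if necessary several, of these top parts as a ``seed'' and then run the greedy step on what remains; iterating, this either opens up enough room or terminates in the situation where $\lambda$ consists essentially of copies of $\lambda_1$ together with at most one $1$. In that terminal situation $\lambda_1$ divides $12m$ or $12m+1$, and an elementary computation — using $\gcd(12m,6m+1)=\gcd(12m+1,6m+1)=\gcd(12m+1,6m+2)=1$ — shows the only possible miss is excluded by the divisibility, so the construction still goes through. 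A separate sub-case is when $\lambda$ consists only of $2$'s and $3$'s (and perhaps one $1$): then no single central part gives a wide enough window, and one instead packs $2$'s and $3$'s directly onto the two legs, using that $|A|=6m$ is divisible by $6$ and $|B|=6m-2$ is even to guarantee the needed multiplicities exist and absorb the discrepancy between the legs.

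The step I expect to be the main obstacle is exactly this last cluster of edge cases — partitions with many equal parts near the maximum, and all-small-part partitions — where the clean greedy argument does not immediately apply. Getting through them requires exploiting the precise values $6m$ and $6m-2$, and in particular the freedom in how far the central set reaches into \emph{each} leg separately, so as to make the central part's size equal an actual part of $\lambda$ while both leg tails come out to exactly the right lengths. Everything else is bookkeeping built on the reduction in the first paragraph.
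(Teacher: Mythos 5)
Your reduction in the first paragraph is correct, and it is essentially the paper's own starting point in different clothing: the paper's criterion ``some rearrangement $\alpha$ of $\lambda$ has no partial sum in $\{6m-1,6m\}$'' is exactly your $|D|=2$ condition that some part $s$ and some sub-multiset $G_A$ of the remaining parts satisfy $\sum G_A\le 6m$ and $12m+1-s-\sum G_A\le 6m-2$. The two easy regimes (at least two $1$'s; $\lambda_1\ge 6m+1$) and the greedy landing argument for the generic case are also fine. The problem is that everything after that --- which is where essentially all of the work in this proposition lives --- is only gestured at, and the gestures are not accurate enough to be filled in as stated. The ``seed and iterate'' step is not an argument: you do not say what improves at each iteration or why it terminates, and your description of the terminal situation is wrong. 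Concretely, if the greedy prefix with $P_j\le 6m<P_{j+1}$ misses the window $[\,6m+3-\lambda_1,\,6m\,]$, then $m_{j+1}\ge \lambda_1-1$; moreover, if any remaining part lies in $[2,\lambda_1-2]$ you can simply substitute it for $m_{j+1}$ and land in the window, so the genuinely bad case is that \emph{every} part of $\lambda$ lies in $\{\lambda_1-1,\lambda_1\}$. That is a two-parameter family, not ``copies of $\lambda_1$ together with at most one $1$,'' and the divisibility computation you cite ($\gcd(12m,6m+1)$, etc.) only handles the all-parts-equal subfamily; the mixed case needs a separate analysis of which values $i\lambda_1+k(\lambda_1-1)$ are attainable within the available multiplicities. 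Likewise, your claim that for partitions into $2$'s and $3$'s ``no single central part gives a wide enough window'' is false --- with $s=3$ the window is the single value $6m$, which is always attainable as $3i+2k$ --- so the alternative leg-packing you propose is both unnecessary and unproved.

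So: sound skeleton, genuine gap precisely where you predicted the obstacle would be. The approach can be completed (the residual $\{\lambda_1-1,\lambda_1\}$ case does succumb to divisibility arguments using $as+b(s-1)=12m+1$), but none of that is in the write-up. For comparison, the paper closes these cases not by subset-sum/greedy reasoning but by explicit surgery on rearrangements --- moving a part of size $x\ge 3$, a $1$, or a $\{2,3\}$ pair across the critical position --- in a short exhaustive case analysis on which of $6m-1,6m$ appear in $\mathrm{set}(\beta)$ and $\mathrm{set}(\beta^*)$; if you want to salvage your route, you should either adopt that surgery or write out the two-parameter divisibility argument in full.
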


Note that by Theorem \ref{thm:spider4}, the spider $S(6m,6m-2,1,1)$ is not $e$-positive.

\begin{proof}
We claim that if $\lambda=(\lambda_1,\ldots,\lambda_\ell)$ has a rearrangement $\alpha$ with $6m-1,6m\notin\text{set}(\alpha)$, then $S(6m,6m-2,1,1)$ has a connected partition of type $\lambda$. Label the vertices on the leg of length $6m-2$ as $u_1,\ldots,u_{6m-2}$, moving in from the leaf, and label the vertices on the leg of length $6m$ as $v_1,\ldots,v_{6m}$ similarly. Suppose that $\alpha$ is a rearrangement of $\lambda$ that satisfies
\begin{equation*}\alpha_1+\cdots+\alpha_i\leq 6m-2\text{ and }\alpha_1+\cdots+\alpha_{i+1}\geq 6m+1.\end{equation*}
Then we can partition the vertices $\{u_1,u_2,\ldots,u_{\alpha_1+\cdots+\alpha_i}\}$ into paths of orders $\alpha_1,\ldots,\alpha_i$, we can partition the vertices $\{v_1,v_2,\ldots,v_{\alpha_{i+2}+\cdots+\alpha_\ell}\}$ into paths of orders $\alpha_{i+2},\ldots,\alpha_\ell$, and the remaining vertices form a connected set of order $\alpha_{i+1}$. This establishes our claim.\\

Note that if $\lambda$ has at least two $1$'s or if $\lambda=(2,2,\ldots,2,1)$, then $S(6m,6m-2,1,1)$ has a connected partition of type $\lambda$, so we may assume that $\lambda$ has at most one $1$ and has a part $x\geq 3$. By our claim, it suffices to find a rearrangement $\alpha$ of $\lambda$ with $6m-1,6m\notin\text{set}(\alpha)$. Fix a rearrangement $\beta$ of $\lambda$. We may assume that $6m-1$ or $6m\in\text{set}(\beta)$, otherwise we could take $\alpha=\beta$. Letting $\beta^*=(\beta_\ell,\ldots,\beta_1)$ denote the reverse composition, we may assume that $6m-1$ or $6m\in\text{set}(\beta^*)$, otherwise we could take $\alpha=\beta^*$. Now there are only a few cases to consider.\\

If $6m\in\text{set}(\beta)$ and $6m\in\text{set}(\beta^*)$, we must have $\beta_1+\cdots+\beta_i=6m$, $\beta_{i+1}=1$,  and $\beta_{i+2}+\cdots+\beta_\ell=6m$ for some $i$. By reversing, we may assume without loss of generality that some part $\beta_k=x\geq 3$ for $1\leq k\leq i$. Then the composition
\begin{equation*}
\alpha=(\beta_1,\ldots,\beta_{k-1},\beta_{k+1},\ldots,\beta_i,1,x,\beta_{i+2},\ldots,\beta_\ell)
\end{equation*}
has $\text{set}(\alpha)=\{\ldots,6m-x,6m-x+1,6m+1,\ldots\}$, so $6m-1,6m\notin\text{set}(\alpha)$.\\

If $6m-1\in\text{set}(\beta)$, $6m\notin\text{set}(\beta)$, and $6m\in\text{set}(\beta^*)$, we must have $\beta_1+\cdots+\beta_i=6m-1$, $\beta_{i+1}=2$, and $\beta_{i+2}+\cdots+\beta_\ell=6m$ for some $i$. If there is some $\beta_k=x\geq 3$ for $1\leq k\leq i$, then the composition
\begin{equation*}
\alpha=(\beta_1,\ldots,\beta_{k-1},\beta_{k+1},\ldots,\beta_i,2,x,\beta_{i+2},\ldots,\beta_\ell)
\end{equation*}
has $\text{set}(\alpha)=\{\ldots,6m-x-1,6m-x+1,6m+1,\ldots\}$, so $6m-1,6m\notin\text{set}(\alpha)$. Otherwise, the parts $(\beta_1,\ldots,\beta_i)$ consist of a single $1$ and $(3m-1)$ $2$'s, some $\beta_k=x\geq 3$ for $i+2\leq k\leq \ell$, and the composition
\begin{equation*}
\alpha=(2,2,\ldots,2,2,x,1,2,\beta_{i+2},\ldots,\beta_{k-1},\beta_{k+1},\ldots,\beta_\ell)
\end{equation*}
has $\text{set}(\alpha)=\{\ldots,6m-2,6m-2+x,6m-1+x,\ldots\}$, so $6m-1,6m\notin\text{set}(\alpha)$. If $6m\in\text{set}(\beta)$, $6m-1\in\text{set}(\beta^*)$, and $6m\notin\text{set}(\beta^*)$, we can reverse $\beta$ to reduce to the previous case.\\

Finally, if $6m-1\in\text{set}(\beta)$, $6m\notin\text{set}(\beta)$, $6m-1\in\text{set}(\beta^*)$, and $6m\notin\text{set}(\beta^*)$, then we must have $\beta_1+\cdots+\beta_i=6m-1$, $\beta_{i+1}=3$, and $\beta_{i+2}+\cdots+\beta_\ell=6m-1$ for some $i$. If $\beta_k=1$ for some $1\leq k\leq i$, then the composition
\begin{equation*}
\alpha=(\beta_1,\ldots,\beta_{k-1},\beta_{k+1},\ldots,\beta_i,3,1,\beta_{i+2},\ldots,\beta_\ell)
\end{equation*}
has $\text{set}(\alpha)=\{\ldots,6m-2,6m+1,6m+2,\ldots\}$, so $6m-1,6m\notin\text{set}(\alpha)$. If $\beta_k=y\geq 4$ for some $1\leq k\leq i$, then the composition
\begin{equation*}
\alpha=(\beta_1,\ldots,\beta_{k-1},\beta_{k+1},\ldots,\beta_i,3,y,\beta_{i+2},\ldots,\beta_\ell)
\end{equation*}
has $\text{set}(\alpha)=\{\ldots,6m-1-y,6m+2-y,6m+2,\ldots\}$, so $6m-1,6m\notin\text{set}(\alpha)$. Now we may assume that every $1\leq k\leq i$ has $\beta_k=2$ or $3$, and by reversing, that every $i+2\leq k\leq\ell$ also has $\beta_k=2$ or $3$. Because $6m-1$ is not divisible by either $2$ or $3$, there must be some $1\leq k\leq i$ with $\beta_k=3$ and some $i+2\leq k'\leq\ell$ with $\beta_{k'}=2$. Now the composition
\begin{align*}
\alpha=(\beta_1,\ldots,\beta_{k-1},\beta_{k+1},\ldots,\beta_i,2,3,3,\beta_{i+2},\ldots,\beta_{k'-1},\beta_{k'+1},\ldots,\beta_\ell)
\end{align*}
has $\text{set}(\alpha)=\{\ldots,6m-4,6m-2,6m+1,6m+4,\ldots\}$, so $6m-1,6m\notin\text{set}(\alpha)$. This completes the proof.
\end{proof}

\section{Appendix}\label{sec:appendix}

\begin{method}\label{method:40} This is used to check that for integers $b$, $c$, and $n$ with $2\leq c\leq 40$, $2c\leq b\leq\frac{c^2}2$, and $2b+c+1\leq n\leq\left\lceil\frac b{c-1}\right\rceil(b+1)$, we can find a positive integer $q$ such that the integers $x$ and $y$ as in \eqref{eq:xy} satisfy $x\geq c+1$, $x<y$, and $tx\leq n\leq ty$ for some $t$; and therefore that there exists a partition $\lambda$ of $n$ with all parts in the interval $J=\{x,\ldots,y\}$.
\begin{lstlisting}
def checkc40():
	for c in range(2,41):
		for b in range(2*c,floor(c^2/2)+1):
			for n in range(2*b+c+1,ceil(b/(c-1))*(b+1)+1):
				print('b:',b,'c:',c,'n:',n)
				foundq=false
				for q in range(1,ceil((b+1)/c)):
					x=ceil((b+1)/q)
					y=floor((b+c)/q)
					t=floor(n/x)
					if x>=c+1 and x<y and t*x<=n<=t*y:
						print('q:',q,'x:',x,'y:',y,'t:',t)
						foundq=true
			if not foundq:
				return false
	return true
\end{lstlisting}
\end{method}

\begin{method}\label{method:500} This is used to check that for integers $b$ and $c$ with $2\leq c\leq 40$ and $2c\leq b\leq\frac{c^2}2$, we can find a positive integer $q$ such that the integers $x$ and $y$ as in \eqref{eq:xy} satisfy $x\geq c+1$, $x<y$, and the inequality \eqref{eq:analysisbound}; and therefore that there exists a partition $\lambda$ of $n$ with all parts in the interval $J=\{x,\ldots,y\}$.
\begin{lstlisting} 
def checkc500():
	for c in range(41,500):
		for b in range(2*c,floor(c^2/2)+1):
			print('b:',b,'c:',c)
			foundq=false
			for q in range(1,ceil((b+1)/c)):
				x=ceil((b+1)/q)
				y=floor((b+c)/q)
				M=ceil((x-1)/(y-x))*x
				if x>=c+1 and x<y and M<=2*b+c+1:
					print('q:',q,'x:',x,'y:',y,'M:',M)
					foundq=true
			if not foundq:
				return false
	return true

\end{lstlisting}
\end{method}

\section{Acknowledgments}
The author would like to thank Jos\'e Aliste-Prieto, Richard Stanley, Stephanie van Willigenburg, and Kai Zheng for helpful discussions.

%%e[2, 2, 2, 2, 2, 2, 1] + 11*e[3, 2, 2, 2, 2, 1, 1] - 13*e[3, 2, 2, 2, 2, 2] + 20*e[3, 3, 2, 2, 1, 1, 1] - 13*e[3, 3, 2, 2, 2, 1] + 72*e[3, 3, 3, 2, 1, 1] - 70*e[3, 3, 3, 2, 2] + 40*e[3, 3, 3, 3, 1] + 3*e[4, 2, 2, 2, 1, 1, 1] + 27*e[4, 2, 2, 2, 2, 1] + 121*e[4, 3, 2, 2, 1, 1] - 44*e[4, 3, 2, 2, 2] + 24*e[4, 3, 3, 1, 1, 1] + 193*e[4, 3, 3, 2, 1] + 38*e[4, 3, 3, 3] + 9*e[4, 4, 2, 1, 1, 1] + 114*e[4, 4, 2, 2, 1] + 96*e[4, 4, 3, 1, 1] + 79*e[4, 4, 3, 2] + 63*e[4, 4, 4, 1] + 31*e[5, 2, 2, 2, 1, 1] + e[5, 2, 2, 2, 2] + 48*e[5, 3, 2, 1, 1, 1] + 70*e[5, 3, 2, 2, 1] + 128*e[5, 3, 3, 1, 1] - 32*e[5, 3, 3, 2] + 137*e[5, 4, 2, 1, 1] + 27*e[5, 4, 2, 2] + 257*e[5, 4, 3, 1] + 81*e[5, 4, 4] + 16*e[5, 5, 1, 1, 1] + 35*e[5, 5, 2, 1] + 18*e[5, 5, 3] + 10*e[6, 2, 2, 1, 1, 1] + 60*e[6, 2, 2, 2, 1] + 159*e[6, 3, 2, 1, 1] + 23*e[6, 3, 2, 2] + 174*e[6, 3, 3, 1] + 15*e[6, 4, 1, 1, 1] + 228*e[6, 4, 2, 1] + 140*e[6, 4, 3] + 84*e[6, 5, 1, 1] + 20*e[6, 5, 2] + 55*e[6, 6, 1] + 54*e[7, 2, 2, 1, 1] + 29*e[7, 2, 2, 2] + 24*e[7, 3, 1, 1, 1] + 177*e[7, 3, 2, 1] + 58*e[7, 3, 3] + 78*e[7, 4, 1, 1] + 92*e[7, 4, 2] + 71*e[7, 5, 1] + 29*e[7, 6] + 14*e[8, 2, 1, 1, 1] + 85*e[8, 2, 2, 1] + 93*e[8, 3, 1, 1] + 74*e[8, 3, 2] + 122*e[8, 4, 1] + 27*e[8, 5] + 57*e[9, 2, 1, 1] + 41*e[9, 2, 2] + 116*e[9, 3, 1] + 59*e[9, 4] + 9*e[10, 1, 1, 1] + 74*e[10, 2, 1] + 47*e[10, 3] + 31*e[11, 1, 1] + 31*e[11, 2] + 35*e[12, 1] + 13*e[13]
%

\printbibliography
\end{document}